\theoremstyle{definition}
\newtheorem{defin}{Definition}[section]
\newtheorem{bem}[defin]{Remark}
\theoremstyle{plain}
\newtheorem{prop}[defin]{Proposition}
\newtheorem{theo}[defin]{Theorem}
\numberwithin{equation}{section} 
\newcommand{\R}{\mathbb{R}}
\newcommand{\divergenz}{\text{div}\,}
\newcommand{\supp}{\text{supp }}
\newcommand{\dInt}{\ensuremath{\mathrm{d}}}
\definecolor{middlegray}{rgb}{0.3,0.3,0.3}
\definecolor{lightgray}{rgb}{0.7,0.7,0.7}
\definecolor{red}{rgb}{.8,0,0}
\definecolor{blue}{rgb}{0,0,0.5}
\definecolor{yac}{rgb}{0.5,0.5,0.1}
\DeclareMathOperator{\dist}{\mathrm{dist}}
\definecolor{lightgray}{rgb}{0.5, 0.5, 0.5}
\definecolor{verylightgray}{rgb}{0.92, 0.9, 0.88}
\newcommand{\bN}{\ensuremath{\mathbb N}}
\newcommand{\cont}{\ensuremath{\mathcal{C}}}
\newcommand{\abs}[1]{\ensuremath{\lvert #1 \rvert}}
\newcommand{\nrm}[1]{\ensuremath{\lVert #1 \rVert}}
\newcommand{\nnrm}[2]{\ensuremath{\lVert #1 \rVert_{#2}}}		
\newcommand{\sgnnrm}[2]{\ensuremath{\Big\lVert #1 \Big\rVert_{#2}}}
\newcommand{\rklam}[1]{\left(#1\right)}					
\newcommand{\grklam}[1]{\big(#1\big)}
\newcommand{\sgrklam}[1]{\Big(#1\Big)}
\newcommand{\ssgrklam}[1]{\bigg(#1\bigg)}
\newcommand{\rrklam}[1]{\Bigg(#1\Bigg)}	
\newcommand{\ggklam}[1]{\big\{#1\big\}}						
\newcommand{\sggklam}[1]{\Big\{#1\Big\}}
\newcommand{\ssggklam}[1]{\bigg\{#1\bigg\}}					
\newcommand{\rgklam}[1]{\Bigg\{#1\Bigg\}}
\newcommand{\eklam}[1]{\left[#1\right]}
\DeclareFontFamily{U}{matha}{\hyphenchar\font45}
\DeclareFontShape{U}{matha}{m}{n}{
      <5> <6> <7> <8> <9> <10> gen * matha
      <10.95> matha10 <12> <14.4> <17.28> <20.74> <24.88> matha12
      }{}
\DeclareSymbolFont{matha}{U}{matha}{m}{n}
\DeclareFontFamily{U}{mathx}{\hyphenchar\font45}
\DeclareFontShape{U}{mathx}{m}{n}{
      <5> <6> <7> <8> <9> <10>
      <10.95> <12> <14.4> <17.28> <20.74> <24.88>
      mathx10
      }{}
\DeclareSymbolFont{mathx}{U}{mathx}{m}{n}
\DeclareMathDelimiter{\vvvert}{0}{matha}{"7E}{mathx}{"17}
\definecolor{frank}{rgb}{0.2,0.2,1.0} 
\definecolor{petru}{rgb}{0.7,0.1,0.1} 
\newcommand{\cef}{\color{black}}
\newcommand{\pci}{\color{petru}}
\newcommand{\icp}{\color{black}}
\let\oldmarginpar\marginpar
\renewcommand\marginpar[1]{%
\-\oldmarginpar[\raggedleft\footnotesize{\color{magenta} #1}]%
{\raggedright\footnotesize{\color{magenta} #1}}}
\begin{document}

\title{Besov Regularity for the Stationary Navier-Stokes Equation on Bounded Lipschitz Domains\thanks{The first author has been supported by the Deutsche Forschungsgemeinschaft (DFG, grant DA~360/19-1). The work of the last two authors have been supported by the Deutsche Forschungsgemeinschaft (DFG, grant DA~360/20-1). The second author has been also partially supported by the Marsden Fund Council from Government funding, administered
by the Royal Society of New Zealand.}}
\author{Frank Eckhardt\thanks{Corresponding author. Email: feckhardt@mathematik.uni-marburg.de}, Petru A. Cioica-Licht, Stephan Dahlke}
\date{}
\maketitle

\begin{abstract}
\noindent We use the scale $B^s_{\tau}(L_\tau(\Omega))$, $1/\tau=s/d+1/2$, $s>0$, to study the regularity of the stationary Stokes equation on bounded Lipschitz domains $\Omega\subset\R^d$, $d\geq 3$, with connected boundary. The regularity in these Besov spaces determines the order of convergence of nonlinear approximation schemes. Our proofs rely on a combination of weighted Sobolev estimates and wavelet characterizations of Besov spaces. By using Banach's fixed point theorem, we extend this analysis to the stationary Navier-Stokes equation with suitable Reynolds number and data, respectively.
\end{abstract}

\noindent\textbf{Keywords.} Stokes equation, Navier-Stokes equation, Besov space, weighted Sobolev estimate,  wavelet, nonlinear approximation, fixed point theorem.

\smallskip

\noindent\textbf{2010 Mathematics Subject Classification.}  Primary: 35B65, 35Q30, 76D05, 76D07; Secondary: 42C40, 46E35,  65T60.

\section{Introduction}\label{Sec:introduction}

The Navier-Stokes equations provide  a mathematical model of the motion of a fluid and form the basis for the theory of computational fluid
dynamics. Due to their relevance, the Navier-Stokes equations have been very intensively studied over the centuries, hence the amount of literature is enormous and can clearly not be  discussed in detail  here. Let us just refer to \cite{CoFo1988, Gal2011,
 Te2000, Tri2015} for an overview. An analytic description
of the solution is only available in rare cases, so that numerical
schemes for the constructive approximation of the solutions are needed.
Once again, the deluge of literature cannot be comprehensively presented
here; let  us just refer to \cite{CaHuQu1988, GiRa1986, Te2000} and the references therein.

In this paper, we consider  an important special case: The incompressible, steady-state, viscous Navier-Stokes equation given by
\begin{equation}\label{eq:NavierStokes}
\begin{alignedat}{3}
-\Delta u +  \nu u\cdot(\nabla u)+ \nabla \pi & = f &&~\text{ on }
\Omega, \\
   \divergenz u & = 0 &&~\text{ on } \Omega,\\
   u & =  g &&~\text{ on } \partial \Omega;
\end{alignedat}\tag{NAST}
\end{equation}
$\Omega$ denotes a bounded Lipschitz domain in $\R^d$, $d\geq 3$, with connected boundary,  $u$ is the velocity field of the fluid, $\pi$ denotes  the pressure, $f$ is  the given body force, $g$ is a prescribed velocity field, and
$\nu >0$ denotes the Reynolds number that describes the viscosity of the
fluid.  We are concerned with the regularity analysis  of 
solutions to \eqref{eq:NavierStokes}.  In particular, we study the
smoothness of solutions in the specific scale
\begin{equation}\label{eq:scale}
B^s_{\tau}(L_\tau(\Omega)),\quad
\frac{1}{\tau}=\frac{s}{d}+\frac{1}{2}, \quad s>0,\tag{\textasteriskcentered}
\end{equation}
of  Besov spaces.

The motivation for our analysis can be described as
follows.  Also for the stationary Navier-Stokes equations
(\ref{eq:NavierStokes}), an analytic description of the solution is
usually not possible, so that once again efficient numerical schemes
are needed.  A first natural idea would be to employ classical
nonadaptive schemes.  These methods correspond to a uniform space
refinement strategy, i.e., the underlying degrees of freedom are
uniformly distributed     and do not depend on the shape of the unknown
solutions.
As a rule of thumb, the convergence order of such a nonadaptive, uniform
scheme depends on the regularity  of the object one wants to approximate as
measured in the classical Sobolev scale, see, e.g.,
\cite{DeVo1998,Hac1992}. If the boundary of the underlying domain and the data of the equation are smooth enough, then also sufficiently high Sobolev
smoothness of the solutions to \eqref{eq:NavierStokes} can be expected, since this is the case for the linearized equation \eqref{St}, see \cite{AmrGir91}. However, on a general Lipschitz domain, boundary singularities may occur that significantly  diminish
the Sobolev smoothness, so that the convergence order of uniform schemes
drops down. In these cases,  adaptive algorithms suggest themselves.
Essentially, adaptive algorithms are tricky updating strategies. Based
on an a posteriori error estimator, additional degrees of freedom are
only spent in regions where the numerical approximation is still far
away from the exact solution. Therefore, in each step, the current
distribution of the degrees of freedom depends strongly on the  unknown
solution. Although the idea of adaptivity seems to be
convincing,  one principle problem remains. Adaptive schemes are very
hard to design, to analyze, and to  implement. Therefore, a rigorous
mathematical foundation that indicates that adaptivity really pays is
highly desirable.  Our line of attack to give a reasonable answer is
based on the following observation. Given a dictionary
$\{\psi_{\lambda}\}_{\lambda \in {\mathcal J}}$ of functions, the best
we can expect  is   that an adaptive scheme realizes the convergence
order of best $N$-term approximation with respect to this dictionary. In
this sense, best $N$-term approximation serves as the benchmark for
adaptive algorithms. In best $N$-term approximation, one does not
approximate by linear spaces but by nonlinear manifolds of the form
$$S_N:=\sggklam{ g~|~g=\sum_{\lambda \in \Lambda} c_{\lambda} \psi_{\lambda}, ~
|\Lambda|=N,~c_\lambda\in\mathbb{R}},$$
 i.e., one collects all functions $g$ for which the expansion with respect
to $\{\psi_{\lambda}\}_{\lambda \in {\mathcal J}}$ has at most $N$
nonvanishing coefficients. In many cases, e.g., if the dictionary
consists of a wavelet basis, adaptive algorithms that indeed realize the
convergence order of best $N$-term wavelet approximation schemes are
known to exist, see, e.g., \cite{CohDahDeV2001,CohDahDeV2002}. These  relationships
in mind, the following question arises:  what is the order of
convergence of best $N$-term approximation, and is it higher than the
order of nonadaptive, uniform  schemes? For then, the development of adaptivity
would be completely justified.
It is well-known that in many settings, e.g.,  for the wavelet case,
the order of approximation (in $L_2$) that can be achieved by best
$N$-term approximation exactly depends on the smoothness of the object
under consideration in the so-called {\em adaptivity scale}
\eqref{eq:scale}, i.e,
$$u  \in B^s_{\tau}(L_\tau(\Omega)),\quad
\frac{1}{\tau}=\frac{s}{d}+\frac{1}{2}~ \Longleftrightarrow
~\sum_{N=1}^{\infty}[N^{s/d}\sigma_N(u)]^{\tau}\frac{1}{N}< \infty,
~\sigma_N(u):=\inf_{g\in S_N}\|u-g\|_{L_2},$$
see, e.g, \cite{DeVo1998, DeVoJaPo1992} as well as \cite{DaNoSi2006} for similar relationships for approximations with respect to other norms. 
Consequently, in order to decide the question whether adaptivity pays in the context of Navier-Stokes equations, a rigorous analysis of the regularity of the solutions 
to~\eqref{eq:NavierStokes} in the scale~\eqref{eq:scale} is needed. 
If this regularity is higher than the classical $L_2$-Sobolev smoothness of the solutions under consideration, then adaptivity pays in the sense that there is indeed the possibility that adaptive methods exhibit higher convergence rates than their uniform alternatives.

This paper consists of two parts. In the first part, we
study the  linear version of \eqref{eq:NavierStokes}, i.e., the stationary Stokes
problem

\begin{equation}\label{St}
\begin{alignedat}{3}
-\Delta u +  \nabla \pi & = f &&~\text{ on } \Omega, \\
   \divergenz u & = 0 &&~\text{ on } \Omega,\\
   u & =  g &&~\text{ on } \partial \Omega,
\end{alignedat}\tag{SP}
\end{equation}
on an arbitrary bounded Lipschitz domain $\Omega\subseteq\R^d$, 
$d\geq 3$, with connected boundary.

For this class of problems,  some positive results concerning
Besov regularity in the scale~\eqref{eq:scale} already exist. In \cite{Dah1999}, the Stokes equations on a
polygonal domain in ${\mathbb R}^2$ have been studied.  The proofs were
based on decompositions
of the solutions into regular and singular parts. In \cite{Eck2015},
these results have been generalized to polyhedral domains, whereat
specific Kondratiev spaces have been employed. For the case of general
Lipschitz domains we are interested in here,
first results have been derived by Mitrea and Wright in \cite{MitMat2012}.  Their
proofs are based on the well-known concept of layer potentials.  In
this paper, we improve the results of~\cite{MitMat2012} in the following
sense. 

Our analysis shows that, other than conjectured in \cite[p.~9]{MitMat2012}, the results for the Besov smoothness in the scale \eqref{eq:scale} obtained therein, are not sharp for higher dimension $d\geq 4$. Our proof technique is completely different to the one used in \cite{MitMat2012}.  We first show regularity results in weighted Sobolev
spaces, and then we prove that these spaces can be embedded into the
Besov spaces corresponding to the adaptivity scale
(\ref{eq:scale}), which gives the desired results.

The second  part of this paper is concerned with the Besov regularity
of solutions to~\eqref{eq:NavierStokes}.  To the best of our
knowledge, no  regularity result in the scale~\eqref{eq:scale} has been
obtained so far. We tackle this task  by re-writing~\eqref{eq:NavierStokes} as a fixed point problem. For semilinear elliptic partial differential equations,  this strategy has already been successfully applied in
\cite{DaSi2013}. Nevertheless, there is an important difference. In
\cite{DaSi2013}, the fixed point theorems were  directly applied to
the quasi-Banach spaces in the adaptivity scale~\eqref{eq:scale}, whereas
here we study the problem first in classical Besov spaces  which enables us to reduce everything to the case of the Stokes problem with modified right-hand side. Then the desired Besov regularity results in the scale~\eqref{eq:scale} follows from the corresponding results for the Stokes problem. This approach has the advantage that certain admissibility problems that arise in the context of quasi-Banach spaces can be avoided. Moreover, the application of the Banach fixed point theorem guarantees uniqueness of the solution in a suitable, small ball. To the best of our knowledge, the non-standard fixed point arguments used in~\cite{DaSi2013} only provide the existence of a solution. We show that by proceeding this way we indeed  obtain  the desired result, in the sense that also for~\eqref{eq:NavierStokes} the Besov regularity of the solutions is higher than the standard Sobolev smoothness, so that the use of adaptivity is again completely justified.

This paper is organized as follows. In Section 2, we fix notation and briefly recall some basic concepts that are used in the sequel, in particular, concerning
function spaces and their wavelet characterization. Section 3 is devoted to the
Stokes problem. First of all, in Section 3.1,  we discuss (\ref{St}) in
weighted Sobolev spaces. We generalize regularity results obtained by
\cite{BroShe95} for the homogeneous Stokes equations to the
inhomogeneous case.  Then, in Section 3.2, we
prove that the weighted Sobolev spaces under consideration intersected with classical (unweighted) Sobolev spaces can be embedded
into the Besov spaces from the adaptivity scale (\ref{eq:scale}) up to a certain smoothness $s$ that depends on the Sobolev and the weighted Sobolev regularity.
A combination of these two facts provides our Besov regularity result. In
Section 4, we discuss the stationary Navier-Stokes equations
(\ref{eq:NavierStokes}). We use Banach's fixed point theorem to reduce the problem to the Stokes case. Then, we apply the results derived in Section 3.

\section{Preliminaries}\label{Sec:preliminaries}

\subsection{Notations}\label{Sec:Notation} 

In this paper $G \subseteq \mathbb{R}^d$, $d\geq 2$, stands for an arbitrary (not necessarily bounded) domain. 
By $\mathcal{D}'(G)$ we denote the space of Schwartz distributions on $G$. 
For $\alpha=(\alpha_1,...,\alpha_d)\in\mathbb{N}_0^d$, we write $D^\alpha f:=\frac{\partial^{|\alpha| }f }{\partial^{\alpha_d}x_d\ldots\partial^{\alpha_1}x_1}$ for the corresponding derivative of $f\in \mathcal{D}'(G)$, where $\abs{\alpha}:=\alpha_1+\ldots+\alpha_d$; $D^0f:=f$. 
For $m\in\bN_0$, $\nabla^m f:=\{D^\alpha f : \abs{\alpha}=m\}$ is the set of all $m^\text{th}$ order derivatives of $f$ and is identified with an $\R^n$-valued distribution, $n= {{d+m-1}\choose{m}}$. $\nabla:=\nabla^1$  denotes the gradient and $\Delta:=\sum_{i=1}^d\frac{\partial^2}{\partial x_i\partial x_i}$ is the Laplace operator. 
For $p\in [1,\infty)$ and $m\in\mathbb{N}_0$, $W^m(L_p(G))$ is the classical Sobolev space consisting of all (equivalence classes of) measurable functions $f\colon G\to \R$ such that $\nnrm{f}{W^m(L_p(G))}:=\left(\sum_{|\alpha|\leq m}\nnrm{D^\alpha f}{L_p(G)}^p\right)^{1/p}$ is finite. 
For $p\in (1,\infty)$ and fractional $s\in (0,\infty)\setminus\bN$, we define the Sobolev space $W^s(L_p(G))$ to be the Besov space $B^s_{p}(L_p(G))$, as defined below (see Definition~\ref{def:Bes}).
We write $\mathring{W}^s(L_p(G))$ for the closure with respect to the Sobolev norm $\nnrm{\cdot}{W^{s}(L_p(G))}$  of the space $\mathcal{C}_0^\infty(G)$ of infinitely differentiable functions with compact support within $G$. 
For negative $s<0$, $W^{s}(L_p(G))$ is defined as the dual space of  $\mathring{W}^{-s}(L_{p'}(G))$, where $1/p+1/p'=1$.
If $p=2$ we use the common notations $H^s(G):=W^s(L_2(G))$ and $\mathring{H}^s(G):=\mathring{W}^s(L_2(G))$, $s\in\R$. 
By making slight abuse of notation, we sometimes use the same abbreviations for $\mathbb R^d$-valued (generalized) functions.
Moreover, we use the common notation
\[
u\cdot(\nabla v):=\sum_{i=1}^d u_i\frac{\partial v}{\partial x_i},
\]
for $d$-dimensional (generalized) functions $u$ and $v$ and write $\R_G$ for the set of all real-valued constant functions on a domain $G$.

%
Throughout, we denote by $\Omega$ a bounded Lipschitz domain contained in $\mathbb{R}^d$, $d\geq 3$,  which in some of the central statements is assumed to have connected boundary. 
We set
$$\rho(x):=\dist(x,\partial\Omega),\quad x\in\Omega, $$
and define the weighted Sobolev space $W^m_\alpha(L_p(\Omega))$ for $m\in\bN_0$,  $\alpha >0$ and $p\in[1,\infty)$ as
$$W^m_\alpha(L_p(\Omega)):=\left\{f\in L_p(\Omega): 
 \nnrm{f}{W^m_\alpha(L_p(\Omega))}^p
:=
\nnrm{f}{L_p(\Omega)}^p+\int_\Omega \rho(x)^\alpha \abs{\nabla^m f(x)}_{\ell_p}^p \dInt x<\infty \right\}, $$
where $\abs{\nabla^mf}_{\ell_p}$ is the $\ell_p$-norm of the vector $\nabla^m f$. 
For $p\in (0,\infty)$ and $\max\{0,(d-1)(1/p-1)\}<s<1$ we define the Besov spaces $B^s_p(L_p(\partial\Omega))$ as they were introduced in \cite[Chapter~2.5]{MitMat2012}. We further introduce the subspace
\[
 B^{s,0}_{p}(L_p(\partial\Omega)):=\ggklam{g\in B^{s}_p(L_p(\partial\Omega)): \int_{\partial\Omega}g\cdot\mathbf{n}\,\mathrm{d}\sigma=0},
\]
where $\mathbf{n}$ is the outward unit normal to $\partial\Omega$. We norm this space with the norm inherited from $B^{s}_p(L_p(\partial\Omega))$.
Further we put $W^s(L_p(\partial\Omega)):=B^s_p(L_p(\partial\Omega))$ for $p\in [1,\infty)$ and $s\in (0,1)$. The space $W^1(L_2(\partial\Omega))$ is defined analogously, see for example \cite{BroShe95}.  For $p=2$ we also write $H^s(\partial\Omega):=W^s(L_2(\partial\Omega))$, $s\in (0,1]$. 

For arbitrary normed spaces $E_1,...,E_n$, $n\in\bN$, the Cartesian product $E_1\times \ldots\times E_n$ is endowed with the norm 
$$
\nnrm{(e_1,...,e_n)}{E_1\times \ldots\times E_n}
:=
\sum_{i=1}^n \nnrm{e_i}{E_i},\quad (e_1,...,e_n)\in E_1\times \ldots\times E_n; $$ 
we write shorthand $E^n$ if $E_j=E$ for all $j=1,\ldots,n$.
The intersection of two normed spaces $(E_1,\nnrm{\cdot}{E_1})$ and $(E_2,\nnrm{\cdot}{E_2})$ is normed by
\[
\nnrm{f}{E_1\cap E_2}:=\nnrm{f}{E_1}+\nnrm{f}{E_2}, \qquad f\in E_1\cap E_2.
\]
If $E_1\subseteq E_2$ and there exists a constant $C\in (0,\infty)$, such that
\[
\nnrm{f}{E_2}\leq C\nnrm{v}{E_1},\quad v\in E_1,
\]
then we write $E_1\hookrightarrow E_2$ and say that $E_1$ is embedded in $E_2$.
 Quotient spaces $E/E_0:=\{x+E_0\colon x\in E\}$ of a normed space $(E,\nnrm{\cdot}{E})$ and a subspace $E_0\subseteq E$ are endowed with the usual norm
\[
\nnrm{f}{E/E_0}:=\inf_{g\in E_0}\nnrm{f+g}{E},
\]
where we make use of the common abuse of notation to write simply $f$ instead of the equivalence class $f+E_0$.
Throughout, the letter $C$ denotes a finite positive constant that may differ  from one appearance to another, even in the same chain of inequalities.

\subsection{Besov spaces and wavelet decompositions}\label{Sec:BesovWavelet} 

In this section we present the definition of Besov spaces and describe their wavelet characterization.
Our standard references in this context are  \cite{Coh2003}, \cite{Dis2003} and \cite{Tri1983}.

We introduce the Besov spaces $B^s_q(L_p(G))$ by using the common Fourier-analytical approach. Therefore, we fix an arbitrary function 
$\varphi_0\in \mathcal{C}_0^\infty(\mathbb{R}^d)$ with 
\begin{align*}
\varphi_0(x)=1 \text{ if } |x|\leq 1 \quad\text{and}\quad
\varphi_0(x)=0 \text{ if } |x|\geq 3/2,
\end{align*}
and define for $k\in\bN$,
\begin{align*}
\varphi_k(x) := \varphi_0(2^{-k}x)-\varphi_0(2^{-k+1}x) \quad \text{for} \quad x\in\mathbb{R}^d,
\end{align*}
to obtain a smooth dyadic resolution of unity on $\R^d$, i.e.,  $\varphi_k\in \mathcal{C}_0^\infty(\mathbb{R}^d)$ for all $k\in\mathbb{N}_0$, and
\begin{align*}
\sum_{k=0}^\infty \varphi_k(x) = 1 \quad\text{for all}\quad x\in\mathbb{R}^d.
\end{align*}
We write $\mathfrak{F}$ for the Fourier transform on the space $\mathcal{S}'(\mathbb{R}^d)$ of tempered distributions. Recall that $\mathfrak{F}^{-1}(\varphi_k \mathfrak{F}f)$ is an entire analytic function for arbitrary $f\in \mathcal{S}'(\mathbb{R}^d)$ and $k\in\bN_0$.

\begin{defin}\label{def:Bes}
Let $\{\varphi_k\}_{k\in\mathbb{N}_0}\subseteq \mathcal{C}_0^\infty(\mathbb{R}^d)$ be a resolution of unity as described above. Furthermore, let $0<p,q<\infty$ and $s\in\mathbb{R}$.

\medskip

\noindent\textbf{(i)} The \emph{Besov space $B^s_q(L_p(\R^d))$} is defined by 
\begin{align*}
B^s_q(L_p(\mathbb{R}^d)):=
\rgklam{
f\in \mathcal{S}'(\mathbb{R}^d)
\colon
\nnrm{f}{B^s_q(L_p(\mathbb{R}^d))} :=
\ssgrklam{\sum_{k=0}^\infty 2^{ksq}\nnrm{\mathfrak{F}^{-1}\eklam{\varphi_k\mathfrak{F}f}}{L_p(\R^d)}^q}^{1/q}
<\infty}.
\end{align*}

\smallskip

\noindent\textbf{(ii)} Let $G\subseteq\R^d$ be an arbitrary domain. Then, the \emph{Besov space $B^s_q(L_p(G))$} is defined by
$$
B^s_q(L_p(G)):=\ggklam{f\in \mathcal{D}'(G)\colon \text{there exists } g\in B^s_q(L_p(\mathbb{R}^d)): g|_G=f}. $$
It is equipped with the (quasi-)norm
\begin{equation*}
\nnrm{f}{B^s_q(L_p(G))}:= \inf\ggklam{\nnrm{g}{B^s_q(L_p(\mathbb{R}^d))}: g\in B^s_q(L_p(\mathbb{R}^d)), g|_G=f},\quad f\in B^s_q(L_p(G)).
\end{equation*}
\end{defin}

\begin{bem}
Besides the definition given above, Besov spaces $B^s_q(L_p(G))$ of positive smoothness $s>0$ are frequently defined by means of iterated differences, see for example \cite{Tri1983}. The two definitions coincide in the sense of equivalent norms for the range of parameters $s>\max\{0,d\cdot (1/p-1)\}$, provided, e.g., $G$ is a bounded Lipschitz domain or $G=\mathbb{R}^d$, see, e.g., \cite[Theorem~3.18]{Dis2003} and \cite[Theorem~2.5.12]{Tri1983}, respectively. 
\end{bem}

\vspace{0.1cm}

In order to present a characterization of Besov spaces on $\R^d$ in terms of wavelets, we fix the following setting. 
 Let $\phi$ be a scaling function of tensor product type on $\mathbb{R}^d$ and let $\psi_i$, $i=1, \ldots, 2^d-1$, be corresponding multivariate  mother  wavelets such that, for a given $r\in\mathbb{N}$ and some cube $Q$ centered at the origin, the following locality, smoothness and vanishing moment conditions hold. For all $i=1, \ldots, 2^d-1$,
\begin{align}
&\supp \phi,\,\supp\psi_i\subseteq Q,\label{wl1}\\
&\phi,\,\psi_i \in \mathcal{C}^r(\mathbb{R}^d),\label{wl2}\\
&\int_{\mathbb{R}^d} x^\alpha \, \psi_i (x)\, dx=0 \quad\text{ for all $\alpha\in\mathbb{N}_0^d$ with $|\alpha|\leq r$}\label{wl3}.
\end{align}
For the dyadic shifts and dilations of the scaling function and the corresponding wavelets we use the abbreviations
\label{shift}
\begin{align}
\phi_k(x)    &:=\phi(x - k),\;x\in\mathbb{R}^d ,&&\text{for $k\in\mathbb{Z}^d$\label{wl4}, and}\\
\psi_{i,j,k}(x) &:=2^{jd/2}\psi_i(2^jx-k),\;x\in\mathbb{R}^d, &&\text{for $(i,j,k)\in\{1,\ldots,2^d-1\}\times\mathbb{N}_0\times\mathbb{Z}^d$\label{wl5},}
\end{align}
and assume that
\begin{align*}
 \left\{\phi_k,\psi_{i,j,k}\,:\, (i,j,k)\in\{1,\ldots,2^d-1\}\times\mathbb{N}_0\times\mathbb{Z}^d\right\}
\end{align*}
is a Riesz basis of $L_2(\mathbb{R}^d)$.
Further, we assume that there exists a dual Riesz basis satisfying the same requirements. That is, there exist functions \label{dualRiesz}
$\widetilde{\phi} $ and
$\widetilde{\psi}_i$, $ i=1, \ldots , 2^d-1$,
 such that conditions \eqref{wl1}, \eqref{wl2} and \eqref{wl3} hold if $\phi$ and $\psi_i$ are replaced by $\widetilde{\phi} $ and
$\widetilde{\psi}_i$, and such that the biorthogonality relations
\begin{equation*}
\langle \widetilde{\phi}_k, \psi_{i,j,k} \rangle = \langle \widetilde{\psi}_{i,j,k},
 \phi_k \rangle  = 0\, , \quad
\langle \widetilde{\phi}_k, \phi_{l} \rangle  = \delta_{k,l}, \quad
\langle \widetilde{\psi}_{i,j,k}, \psi_{u,v,l} \rangle  = \delta_{i,u}\, \delta_{j,v}\, \delta_{k,l}\, ,
\end{equation*}
are fulfilled. Here we use  analogous  abbreviations to \eqref{wl4} and \eqref{wl5} for the dyadic shifts and dilations of $\widetilde{\phi} $ and
$\widetilde{\psi}_i$,  and $\delta_{k,l}$ denotes the Kronecker symbol. We refer to \cite[Chapter 2]{Coh2003} for the construction of biorthogonal wavelet bases, see also \cite{CohDauFea1992} and \cite{Dau1992}. 


Such a wavelet basis at hand, it is possible to characterize Besov spaces by the decay of the wavelet coefficients in the following way. A proof can be found in \cite[Theorem~3.7.7]{Coh2003}.

\begin{prop}\label{Prop:CharacterizationBesov1}
Let $p,q\in\left(0,\infty\right)$ and $s>\max\left\{0,d\left(1/p-1\right)\right\}$. Choose $r\in\mathbb{N}$ such that $r>s$ and construct a  biorthogonal  wavelet Riesz basis as described above.
Then a locally integrable function $f:\mathbb{R}^d\to\mathbb{R}$ is in the Besov space $B^s_{q}(L_p(\mathbb{R}^d))$ if, and only if,
\begin{equation*}
f
=
\sum_{k\in\mathbb{Z}^d}\langle f,\widetilde{\phi}_k\rangle\,\phi_k
+ 
\sum_{i=1}^{2^{d}-1}\sum_{j\in\mathbb{N}_0}\sum_{k\in\mathbb{Z}^d}\langle f,\widetilde{\psi}_{i,j,k}\rangle\,\psi_{i,j,k}
\end{equation*}
(convergence in $\mathcal D'(\mathbb{R}^d)$) with
\begin{equation}\label{BesovNormDiscrete}
\left(\sum_{k\in\mathbb{Z}^d}\left|\langle f,\widetilde{\phi}_k\rangle\right|^p
\right)^{\frac{1}{p}}
+ 
\left(
\sum_{i=1}^{2^{d}-1}\sum_{j\in\mathbb{N}_0}
2^{j\left( s+d\left(\frac{1}{2}-\frac{1}{p}\right)\right) q}
\left(\sum_{k\in\mathbb{Z}^d}\left|\langle f,\widetilde{\psi}_{i,j,k}\rangle\right|^p\right)^{\frac{q}{p}}\right)^{\frac{1}{q}}
< \infty,
\end{equation}
and \eqref{BesovNormDiscrete}  is an equivalent (quasi-)norm  for $B^s_{q}(L_p(\mathbb{R}^d))$.
\end{prop}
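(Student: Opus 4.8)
\emph{Plan.} I would set up the coefficient (quasi-)Banach space $b^s_{p,q}$ of all sequences $c=\big((c_k)_{k\in\mathbb{Z}^d},(c_{i,j,k})_{i,j,k}\big)$ for which the quantity in \eqref{BesovNormDiscrete} — call it $\nnrm{c}{b^s_{p,q}}$ — is finite, and then reduce the statement to three assertions: (a) the synthesis operator $S\colon c\mapsto\sum_k c_k\phi_k+\sum_{i,j,k}c_{i,j,k}\psi_{i,j,k}$ is bounded from $b^s_{p,q}$ into $B^s_q(L_p(\R^d))$; (b) the analysis operator $T\colon f\mapsto\big((\langle f,\widetilde{\phi}_k\rangle)_k,(\langle f,\widetilde{\psi}_{i,j,k}\rangle)_{i,j,k}\big)$ is bounded from $B^s_q(L_p(\R^d))$ into $b^s_{p,q}$; and (c) $S\circ T=\id$ on $B^s_q(L_p(\R^d))$. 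Once these are in hand, $\nnrm{f}{B^s_q(L_p(\R^d))}=\nnrm{S(Tf)}{B^s_q(L_p(\R^d))}\leq\const\,\nnrm{Tf}{b^s_{p,q}}\leq\const\,\nnrm{f}{B^s_q(L_p(\R^d))}$ gives the norm equivalence, $f=S(Tf)$ gives the expansion, and the convergence of the partial sums in $\mathcal{D}'(\R^d)$ follows from (a) (they already converge in $B^{s'}_q(L_p(\R^d))$ for each $s'<s$); the converse implication (if the expansion holds with coefficients in $b^s_{p,q}$, then $f\in B^s_q(L_p(\R^d))$) is exactly (a). Here I would first record that $s>\max\{0,d(1/p-1)\}$ forces $B^s_q(L_p(\R^d))\hookrightarrow L_1^{\mathrm{loc}}(\R^d)$, so that the pairings defining $T$ make classical sense for the locally integrable representatives in play.

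\emph{The synthesis operator.} For (a) I would observe that, by \eqref{wl1}, \eqref{wl2} and \eqref{wl3}, every $\psi_{i,j,k}$ is, up to a fixed constant, a smooth $(s,p)$-atom anchored at the dyadic cube $2^{-j}(k+Q)$: it is supported in a fixed dilate of that cube, obeys $\nnrm{D^\alpha\psi_{i,j,k}}{L_\infty(\R^d)}\leq\const\,2^{j(d/2+|\alpha|)}$ for $|\alpha|\leq r$, and has all moments up to order $r$ vanishing, while each $\phi_k$ is a coarse-scale atom (for which no moment condition is required). Since $r>s$, and since the additional moment requirement for $(s,p)$-atoms is vacuous under $s>d(1/p-1)$, the standard atomic decomposition theorem for Besov spaces (cf.\ \cite{Tri1983} and \cite[Ch.~3]{Coh2003}, or the $\varphi$-transform calculus of Frazier and Jawerth) yields $\nnrm{Sc}{B^s_q(L_p(\R^d))}\leq\const\,\nnrm{c}{b^s_{p,q}}$, together with the $\mathcal{D}'$-convergence.

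\emph{The analysis operator and reconstruction.} For (b) I would split $f\in B^s_q(L_p(\R^d))$ into its Littlewood--Paley blocks $f=\sum_{\ell\geq0}f_\ell$, $f_\ell=\mathfrak{F}^{-1}[\varphi_\ell\mathfrak{F}f]$, so $\langle f,\widetilde{\psi}_{i,j,k}\rangle=\sum_{\ell\geq0}\langle f_\ell,\widetilde{\psi}_{i,j,k}\rangle$, and estimate the individual pairings by an almost-orthogonality argument: for $\ell\leq j$ subtract the degree-$r$ Taylor polynomial of $f_\ell$ at the centre of $\supp\widetilde{\psi}_{i,j,k}$ (annihilated by the vanishing moments of $\widetilde{\psi}_i$) and invoke Bernstein's inequality for the band-limited $f_\ell$ to produce the factor $2^{-(j-\ell)(r+1)}$; for $\ell>j$ write $f_\ell$ as a sum of $2^{-\ell r}D^\beta$ of band-limited functions with comparable $L_p$-norm and integrate by parts onto $\widetilde{\psi}_i\in\mathcal{C}^r$ to produce $2^{-(\ell-j)r}$. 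This gives $|\langle f,\widetilde{\psi}_{i,j,k}\rangle|\leq\const\,2^{-jd/2}\sum_{\ell\geq0}2^{-|\ell-j|r}M_{\ell,j,k}$, where $M_{\ell,j,k}$ is a localized average of $|f_\ell|$ near $2^{-j}k$ at scale $2^{-\min\{j,\ell\}}$. Taking $\ell_p$-norms in $k$ (using Peetre's maximal inequality to bound the sampled averages by $\nnrm{f_\ell}{L_p}$ up to controlled powers of $2$), multiplying by $2^{j(s+d(1/2-1/p))}$, taking the $\ell_q$-norm in $j$, and applying a discrete Young/Hölder inequality in $\ell-j$ — convergent because $r>s$, with extra slack from $s>d(1/p-1)$ when $p<1$ — I would obtain $\nnrm{Tf}{b^s_{p,q}}\leq\const\,\nnrm{f}{B^s_q(L_p(\R^d))}$; the $\phi_k$-part is handled the same way (only the regime $\ell>0$ is needed, so the absence of cancellation for $\widetilde{\phi}$ is harmless). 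Finally, for (c): for $f$ in the (dense, since $p,q<\infty$) subspace $\mathcal{S}(\R^d)$ the identity $f=S(Tf)$ holds in $L_2(\R^d)$ by the biorthogonality relations and the Riesz basis property; since $S\circ T$ is bounded on $B^s_q(L_p(\R^d))$ by (a) and (b) and agrees with the identity on a dense subspace, $S\circ T=\id$ throughout.

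\emph{Expected main difficulty, and an alternative.} The crux is the analysis estimate in (b): one must produce off-diagonal decay at a rate strictly above $s$ and then sum the resulting almost-diagonal array exactly against the weight $2^{j(s+d(1/2-1/p))q}$ of \eqref{BesovNormDiscrete}. Two points demand care: keeping the $2^{jd/2}$-normalizations of the wavelets perfectly aligned with that weight, and — for $p<1$ — passing from pointwise samples of the band-limited pieces $f_\ell$ to $\nnrm{f_\ell}{L_p}$, which is precisely where Peetre's maximal function, the smoothness $r$, the vanishing moments of $\widetilde{\psi}_i$, and the lower bound $s>d(1/p-1)$ are all consumed. An alternative, more economical route avoids the explicit bookkeeping: one recognizes $2^{jd/2}\langle f,\widetilde{\psi}_{i,j,k}\rangle$ as a sampled local mean of $f$ at scale $2^{-j}$ and applies Triebel's local-means characterization of $B^s_q(L_p(\R^d))$ (see, e.g., \cite{Tri1983}) together with the standard discretization permitted by the compact support of the $\widetilde{\psi}_i$; conditions \eqref{wl2}, \eqref{wl3} and $r>s$ are exactly what make the $\widetilde{\psi}_i$ admissible kernels there.
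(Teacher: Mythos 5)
Your proposal is correct, but it is worth pointing out that the paper does not prove Proposition~\ref{Prop:CharacterizationBesov1} at all: it is quoted verbatim from the literature, with the proof delegated to \cite[Theorem~3.7.7]{Coh2003}. So the comparison is really between your argument and Cohen's. Cohen's route goes through the multiresolution projectors $P_j$ onto the spaces $V_j=\overline{\spaen}\{\phi_{j,k}\}$, establishes Jackson (direct) and Bernstein (inverse) estimates for these projectors under \eqref{wl1}--\eqref{wl3}, and then invokes the characterization of $B^s_q(L_p(\R^d))$ by the decay of the approximation errors $\nnrm{f-P_jf}{L_p}$; the coefficient norm \eqref{BesovNormDiscrete} is extracted afterwards from the $L_p$-stability of the bases of $V_{j+1}\cap V_j^{\perp}$. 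Your argument is instead the Frazier--Jawerth $\varphi$-transform scheme: synthesis via smooth atoms (correctly noting that the moment condition is vacuous because $s>d(1/p-1)$), analysis via almost-orthogonality between the Littlewood--Paley blocks $f_\ell$ and the dual wavelets, and reconstruction by density from the $L_2$ biorthogonality. Your off-diagonal bookkeeping is right: for $\ell\leq j$ the vanishing moments of $\widetilde{\psi}_i$ and Bernstein's inequality give decay beating $2^{js}$ because $r>s$, and for $\ell>j$ the $\mathcal{C}^r$-smoothness of $\widetilde{\psi}_i$ gives decay $2^{-(\ell-j)(r+s)}$, with the loss $2^{(\ell-j)d(1/p-1)}$ for $p<1$ absorbed precisely because $s>d(1/p-1)$. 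What your approach buys is a self-contained proof that makes explicit where each hypothesis ($r>s$, the vanishing moments, the lower bound on $s$, and $p,q<\infty$ for density) is consumed; what the citation buys the authors is brevity and a statement already in exactly the normalization \eqref{wl4}--\eqref{wl5} they use later.
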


A short computation shows that the Besov spaces $B^s_{\tau}(L_\tau(\mathbb{R}^d))$, with $1/\tau=s/d+1/2$, $s>0$, admit the following characterization.

\begin{prop}\label{Prop:CharacterizationBesov2}
Let $s>0$ and $\tau\in\mathbb{R}$ such that  $1/\tau=s/d+1/2$. Choose $r\in\mathbb{N}$ such that $r>s$ and construct a  biorthogonal  wavelet Riesz basis as described above.
Then a locally integrable function $f:\mathbb{R}^d\to\mathbb{R}$ is in the Besov space $B^s_{\tau}(L_\tau(\mathbb{R}^d))$  if, and only if,
\begin{equation*}
f=\sum_{k\in\mathbb{Z}^d}\langle f,\widetilde{\phi}_k\rangle\,\phi_k
    + \sum_{i=1}^{2^{d}-1}\sum_{j\in\mathbb{N}_0}\sum_{k\in\mathbb{Z}^d}\langle f,\widetilde{\psi}_{i,j,k}\rangle\,\psi_{i,j,k}
\end{equation*}
(convergence in $\mathcal D'(\mathbb{R}^d)$) with
\begin{equation}\label{BesovNormDiscrete02}
\left(\sum_{k\in\mathbb{Z}^d}\left|\langle f,\widetilde{\phi}_k\rangle\right|^\tau\right)^{\frac{1}{\tau}}
    + \left(
    \sum_{i=1}^{2^{d}-1}\sum_{j\in\mathbb{N}_0}
    \sum_{k\in\mathbb{Z}^d}\left|\langle f,\widetilde{\psi}_{i,j,k}\rangle\right|^\tau\right)^{\frac{1}{\tau}}
    <   \infty  \, ,
\end{equation}
and \eqref{BesovNormDiscrete02}  is an equivalent (quasi-)norm  for $B^s_{\tau}(L_\tau(\mathbb{R}^d))$.
\end{prop}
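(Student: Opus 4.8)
The plan is to deduce Proposition~\ref{Prop:CharacterizationBesov2} from Proposition~\ref{Prop:CharacterizationBesov1} by simply specializing the parameters $p=q=\tau$ with $1/\tau = s/d + 1/2$ and checking that the weight exponent appearing in \eqref{BesovNormDiscrete} collapses to zero. First I would verify that the hypotheses of Proposition~\ref{Prop:CharacterizationBesov1} are met for this choice: from $1/\tau = s/d + 1/2 > 1/2$ we get $\tau < 2$, and $s>0$ together with $1/\tau = s/d+1/2$ gives $\tau > 0$; moreover $d(1/\tau - 1) = d(s/d - 1/2) = s - d/2 < s$, so the condition $s > \max\{0, d(1/\tau-1)\}$ holds automatically (indeed $s>0$ suffices, and when $\tau\geq 1$ the max is just $0$). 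Hence Proposition~\ref{Prop:CharacterizationBesov1} applies with $p=q=\tau$, and any $r\in\bN$ with $r>s$ is an admissible choice, exactly as assumed in the statement.

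The key computation is the exponent of $2^j$ in the wavelet term of \eqref{BesovNormDiscrete}. With $p=q=\tau$ it reads
\[
2^{j\left(s + d\left(\frac12 - \frac1\tau\right)\right)\tau}.
\]
Now substitute the defining relation $1/\tau = s/d + 1/2$, i.e. $1/2 - 1/\tau = -s/d$, so that $s + d(1/2 - 1/\tau) = s + d\cdot(-s/d) = s - s = 0$. Therefore $2^{j(\cdot)\tau} = 2^0 = 1$, and the double sum in \eqref{BesovNormDiscrete} becomes
\[
\left(\sum_{i=1}^{2^d-1}\sum_{j\in\bN_0}\sum_{k\in\Z^d}\left|\langle f,\widetilde\psi_{i,j,k}\rangle\right|^\tau\right)^{1/\tau},
\]
which is precisely the wavelet part of \eqref{BesovNormDiscrete02}. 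The scaling-function part of \eqref{BesovNormDiscrete} is already $\big(\sum_{k}|\langle f,\widetilde\phi_k\rangle|^\tau\big)^{1/\tau}$ verbatim, since there $p=\tau$. Thus \eqref{BesovNormDiscrete} and \eqref{BesovNormDiscrete02} are literally the same expression under the parameter identification, and the equivalence of \eqref{BesovNormDiscrete02} with $\nnrm{\cdot}{B^s_\tau(L_\tau(\R^d))}$ is exactly the content of Proposition~\ref{Prop:CharacterizationBesov1} in this case.

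There is essentially no obstacle here — the statement is, as the text says, ``a short computation.'' The only points requiring a word of care are (i) noting that the parameter range in Proposition~\ref{Prop:CharacterizationBesov1} is genuinely satisfied (the borderline $\tau=1$, $s=d/2$ case and the sub-one $\tau<1$ case both being covered since $s-d/2<s$), and (ii) recording that the convergence of the wavelet expansion in $\mathcal D'(\R^d)$ and the ``if and only if'' both transfer directly from Proposition~\ref{Prop:CharacterizationBesov1} without modification. Accordingly, the write-up can be kept to a few lines: state the parameter identification, perform the one-line exponent cancellation, and invoke Proposition~\ref{Prop:CharacterizationBesov1}.
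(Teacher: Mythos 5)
Your proposal is correct and matches the paper's intended argument: the paper offers no written proof beyond the remark that ``a short computation'' yields the result from Proposition~\ref{Prop:CharacterizationBesov1}, and your specialization $p=q=\tau$ with the exponent cancellation $s+d(1/2-1/\tau)=0$ is exactly that computation, including the correct verification that $s>\max\{0,d(1/\tau-1)\}$ holds since $d(1/\tau-1)=s-d/2<s$.
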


\section{The stationary Stokes equation}

\subsection{The stationary Stokes equation in (weighted) Sobolev spaces}\label{Sec:StokesSystem}
In this section we collect the relevant results known so far concerning existence, uniqueness, and (weighted) Sobolev regularity of the solution to the stationary Stokes equation
\begin{equation}
\begin{alignedat}{3}
-\Delta u +  \nabla \pi & = f &&~\text{ on } \Omega, \\
  \divergenz u & = 0 &&~\text{ on } \Omega,\\
  u & =  g &&~\text{ on } \partial \Omega, 
\end{alignedat}\tag{\ref{St}} 
\end{equation}
on an arbitrary bounded Lipschitz domain $\Omega\subseteq\R^d$,  $d\geq 3$, with connected boundary.
Since we require $\divergenz u=0$, we have to make sure that the prescribed velocity field $g$ satisfies the compatibility condition
\begin{equation}\label{eq:compCondition}
\int_{\partial\Omega}g \cdot  \mathbf{n} \,\dInt\sigma =0,
\end{equation}
where  $\mathbf{n}$ is the outward unit normal to $\partial\Omega$. 
Following, for instance, \cite[Chapter IV.1]{Gal2011} we call $u\in H^1(\Omega)^d$ a \emph{(weak) solution} of \eqref{St} if $u$ is divergence free, satisfies $u=g$ on the boundary $\partial\Omega$ (in a trace sense) and fulfills the equation
\begin{equation}\label{eq:WeakFormulationSt}
 \int_\Omega \sum_{i,j=1}^d \left(\nabla u\right)_{ij}\left(\nabla \varphi\right)_{ij}(x)\,\dInt x=f(\varphi) - \int_\Omega \sum_{i=1}^d \pi(x)\frac{\partial\varphi_i(x)}{\partial x_i}\,\dInt x 
\end{equation}
for arbitrary $\varphi\in \mathcal{C}_0^\infty(\Omega)$ with a suitable pressure $\pi\in L_2(\Omega)$. 
The existence and uniqueness of such a solution $u\in H^1(\Omega)^d$ of~\eqref{St} can be guaranteed for arbitrary $f\in H^{-1}(\Omega)^d$ and $g\in H^{1/2}(\partial\Omega)^d$ satisfying \eqref{eq:compCondition}, see, e.g., \cite[Theorem IV.1.1]{Gal2011}.
The corresponding pressure $\pi\in L_2(\Omega)$ is only unique up to a constant.
In what follows, whenever we speak about ``the corresponding pressure'' to a solution $u$, we mean any of the corresponding pressures. Up to a certain degree, the solution to~\eqref{St} gets smoother, if the right hand side $f$ and the boundary value $g$ are assumed to be more regular, see \cite{AmrGir91} for instance. If we use classical Sobolev spaces to measure the smoothness, the following  proposition can be proven. Due to the linear structure of~\eqref{St}, the statement follows from \cite[Theorem~2.12]{BroShe95} together with \cite[Theorem~2.2]{BroShe95}, which relies on the results proven in~\cite{FaKeVer1988}.

\begin{prop}\label{Prop:SobRegSt}
Let $\Omega$ be a bounded Lipschitz domain in $\R^d$, $d\geq 3$, with connected boundary.  Furthermore, let $f\in L_2(\Omega)^d$ and let $g\in H^1(\partial\Omega)^d$ fulfill the condition~\eqref{eq:compCondition}.
Then there exists a unique solution $u\in H^{3/2}(\Omega)^d$ to the Stokes equation~\eqref{St} with corresponding pressure $\pi\in H^{1/2}(\Omega)$.
Moreover, the estimate 
\begin{equation}\label{eq:SobEst}
\nnrm{u}{H^{3/2}(\Omega)^d}
+
\inf_{c\in\mathbb{R}}\nnrm{\pi+c}{H^{1/2}(\Omega)}
\leq 
C\,
\grklam{\nnrm{f}{L_2(\Omega)^d}+\nnrm{g}{H^1(\partial\Omega)^d}}
\end{equation} 
holds with a constant $C\in(0,\infty)$ that depends only on $d$ and $\Omega$.
\end{prop}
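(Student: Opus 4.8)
The statement is explicitly declared in the excerpt to be a direct consequence of two results from \cite{BroShe95}, namely \cite[Theorem~2.12]{BroShe95} and \cite[Theorem~2.2]{BroShe95}, the latter resting on \cite{FaKeVer1988}. My plan is therefore not to reprove the deep boundary-regularity estimates but to assemble them correctly and to handle the reduction from the inhomogeneous right-hand side $f\in L_2(\Omega)^d$ to the homogeneous Stokes problem (i.e.\ the case $f=0$) treated in \cite{BroShe95}. The structure I would use is: (1) recall existence/uniqueness of the weak solution $u\in H^1(\Omega)^d$ with pressure $\pi\in L_2(\Omega)/\R$ from \cite[Theorem IV.1.1]{Gal2011}; (2) split $u=v+w$ where $v$ solves the inhomogeneous Stokes problem with $g=0$ and $w$ solves the homogeneous ($f=0$) Stokes problem with boundary data $g$; (3) invoke \cite[Theorem~2.2]{BroShe95} for $w$ to get $w\in H^{3/2}(\Omega)^d$, pressure in $H^{1/2}(\Omega)$, with the estimate in terms of $\nnrm{g}{H^1(\partial\Omega)^d}$; (4) treat $v$ by \cite[Theorem~2.12]{BroShe95} (or classical interior-plus-boundary regularity for the Dirichlet Stokes system on Lipschitz domains), obtaining $v\in H^{3/2}(\Omega)^d$ with pressure in $H^{1/2}(\Omega)$ controlled by $\nnrm{f}{L_2(\Omega)^d}$; (5) add the two estimates and use uniqueness to identify the sum with the solution of Proposition~\ref{Prop:SobRegSt}, absorbing the ambiguity of the pressure into the infimum over $c\in\R$ on the left-hand side of~\eqref{eq:SobEst}.

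For the bookkeeping I would be careful about the following points. The decomposition in step (2) requires that the homogeneous Stokes problem with $g=0$ and $f\in L_2(\Omega)^d\subset H^{-1}(\Omega)^d$ has a solution, which is again \cite[Theorem IV.1.1]{Gal2011}; the compatibility condition~\eqref{eq:compCondition} is needed only for the piece carrying $g$, and it holds there by hypothesis (the $g=0$ piece trivially satisfies it). One must also check that $H^1(\partial\Omega)^d$ is exactly the Dirichlet data class for which \cite{BroShe95} proves the $H^{3/2}$ lift — this is precisely the content of their $L^2$-Dirichlet theory for the Stokes system with the regularity (tangential-gradient) boundary space, and it is the reason the target smoothness is $3/2$ rather than something higher. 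Finally, the claim that the pressure lies in $H^{1/2}(\Omega)$ (and not merely in $L_2$) follows once $u\in H^{3/2}$ by reading off $\nabla\pi=\Delta u+f\in H^{-1/2}(\Omega)^d$ and using the Nečas/Bogovskii-type estimate $\inf_{c}\nnrm{\pi+c}{H^{1/2}(\Omega)}\le C\,\nnrm{\nabla\pi}{H^{-1/2}(\Omega)^d}$ valid on bounded Lipschitz domains; alternatively one quotes the pressure estimate directly from \cite{BroShe95}.

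The main obstacle, such as it is, is not analytic depth but precise citation matching: one has to verify that the two cited theorems of \cite{BroShe95} are stated for general bounded Lipschitz domains with connected boundary in dimension $d\ge 3$ (the connectedness of $\partial\Omega$ enters in ruling out extra cohomological obstructions/constants in the pressure and in the solvability theory), and that the constant $C$ there depends only on $d$ and $\Omega$. Once the decomposition $u=v+w$ is in place, the remaining steps are routine: linearity gives $\|u\|_{H^{3/2}}\le\|v\|_{H^{3/2}}+\|w\|_{H^{3/2}}$, and the two right-hand sides are already of the required form $C(\|f\|_{L_2}+\|g\|_{H^1(\partial\Omega)})$. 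Uniqueness of $u\in H^{3/2}(\Omega)^d\subset H^1(\Omega)^d$ is inherited from the uniqueness of the weak solution. I would present this as a short paragraph of reduction followed by the assembled estimate, rather than a long computation.
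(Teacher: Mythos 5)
Your proposal is correct and follows essentially the same route as the paper: the paper proves Proposition~\ref{Prop:SobRegSt} precisely by invoking the linear structure of \eqref{St} to split off the zero-boundary-data piece (handled by \cite[Theorem~2.12]{BroShe95}) from the homogeneous piece with boundary data $g$ (handled by \cite[Theorem~2.2]{BroShe95}, which rests on \cite{FaKeVer1988}), and then adds the two estimates. Your additional bookkeeping on the compatibility condition, the pressure normalization via the infimum over $c\in\R$, and the inheritance of uniqueness from the weak theory is consistent with what the paper leaves implicit.
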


If $\Omega$ is only assumed to be a bounded Lipschitz domain with connected boundary, we cannot guarantee higher regularity of the solution to~\eqref{St} in the classical Sobolev spaces, 
even if we assume the body force  $f$ and  prescribed velocity field  $g$ to be smoother than required above. This is due to boundary singularities, which can  cause the second derivatives of the solution to blow up near the boundary and  can   therefore diminish its Sobolev regularity. This effect is already known for a long time from the theory of elliptic equations on polygonal and polyhedral domains as well as on general bounded Lipschitz domains, see, e.g., \cite{Gri1985, Gri1992,JerKen1995}. However, we can capture the bad behavior of the solution at the boundary by using appropriate powers of the distance $\rho(x):=\dist(x,\partial\Omega)$ of a point $x\in\Omega$ to the boundary $\partial\Omega$. In particular, the following holds. 

 
\begin{prop}\label{Prop:WSobRegSt}
Given the setting of Proposition~\ref{Prop:SobRegSt}, the estimate  
\begin{equation}\label{eq:wSob:est}
\int_\Omega\rho(x)\cdot \abs{\nabla^2 u(x)}^2 \,\dInt x
+
\int_\Omega\rho(x)\cdot \abs{\nabla \pi(x)}^2 \,\dInt x
\leq  C
\left(\nnrm{f}{L_2(\Omega)^d}^2+\nnrm{g}{H^1(\partial\Omega)^d}^2\right)
\end{equation}
holds with a constant $C\in (0,\infty)$ that depends only on $d$ and $\Omega$. 
\end{prop}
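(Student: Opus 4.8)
The plan is to use the linearity of \eqref{St} to reduce the weighted estimate to the case of a vanishing body force $f$, which is --- for a homogeneous right-hand side and $H^1(\partial\Omega)$ boundary data on a bounded Lipschitz domain with connected boundary --- precisely of the type treated in \cite{BroShe95}, and to absorb the contribution of $f$ into a regular particular solution. Concretely, I would first fix an open ball $B\subseteq\R^d$ with $\overline{\Omega}\subseteq B$, extend $f$ by zero to $\widetilde f\in L_2(B)^d$ (so that $\nnrm{\widetilde f}{L_2(B)^d}=\nnrm{f}{L_2(\Omega)^d}$), and let $(w,q)\in\mathring{H}^1(B)^d\times L_2(B)$ be the weak solution of the Stokes problem on $B$ with right-hand side $\widetilde f$ and homogeneous Dirichlet data. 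Since $B$ is smooth, the classical $L_2$-regularity theory for the Stokes system (see, e.g., \cite[Chapter~IV]{Gal2011}) gives $w\in H^2(B)^d$ and $q\in H^1(B)$ with $\nnrm{w}{H^2(B)^d}+\nnrm{\nabla q}{L_2(B)^d}\leq C\,\nnrm{f}{L_2(\Omega)^d}$. Restricting to $\Omega$ yields $w|_{\Omega}\in H^2(\Omega)^d\hookrightarrow H^{3/2}(\Omega)^d$ and, by the trace theorem on the bounded Lipschitz domain $\Omega$ ($H^1(\partial\Omega)$ being the trace space of $H^{3/2}(\Omega)$, cf.\ \cite{BroShe95}), $w|_{\partial\Omega}\in H^1(\partial\Omega)^d$ with $\nnrm{w|_{\partial\Omega}}{H^1(\partial\Omega)^d}\leq C\,\nnrm{f}{L_2(\Omega)^d}$; moreover $\int_{\partial\Omega}w\cdot\mathbf n\,\dInt\sigma=\int_\Omega\divergenz w\,\dInt x=0$.

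Next I would set $v:=u-w|_{\Omega}\in H^1(\Omega)^d$ and $q':=\pi-q|_{\Omega}$. Subtracting the Stokes equations for $u$ and for $w$ shows that $v$ is divergence free, satisfies $v=\widetilde g:=g-w|_{\partial\Omega}$ on $\partial\Omega$ in the trace sense and $-\Delta v+\nabla q'=0$ in $\Omega$; the datum $\widetilde g\in H^1(\partial\Omega)^d$ inherits the compatibility condition \eqref{eq:compCondition} from $g$ and $w$, and $\nnrm{\widetilde g}{H^1(\partial\Omega)^d}\leq\nnrm{g}{H^1(\partial\Omega)^d}+C\,\nnrm{f}{L_2(\Omega)^d}$. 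By the uniqueness statement for \eqref{St}, $v$ is \emph{the} weak solution of the homogeneous Stokes problem with boundary value $\widetilde g$, so that $u=v+w|_\Omega$ is indeed the solution from Proposition~\ref{Prop:SobRegSt}. The weighted estimate of \cite{BroShe95} for the homogeneous Stokes equations on bounded Lipschitz domains with connected boundary then yields
\[
\int_\Omega\rho(x)\,\abs{\nabla^2 v(x)}^2\,\dInt x+\int_\Omega\rho(x)\,\abs{\nabla q'(x)}^2\,\dInt x\leq C\,\nnrm{\widetilde g}{H^1(\partial\Omega)^d}^2 ,
\]
whereas, since $\Omega$ is bounded (so $\rho\leq R$ on $\Omega$ for some $R\in(0,\infty)$), the $H^2(B)^d\times H^1(B)$-regularity of $(w,q)$ gives
\[
\int_\Omega\rho(x)\,\abs{\nabla^2 w(x)}^2\,\dInt x+\int_\Omega\rho(x)\,\abs{\nabla q(x)}^2\,\dInt x\leq R\,\grklam{\nnrm{w}{H^2(B)^d}^2+\nnrm{\nabla q}{L_2(B)^d}^2}\leq C\,\nnrm{f}{L_2(\Omega)^d}^2 .
\]
Writing $\nabla^2 u=\nabla^2 v+\nabla^2 w$ and $\nabla\pi=\nabla q'+\nabla q$ on $\Omega$ and using $|a+b|^2\leq 2|a|^2+2|b|^2$ together with the last two displays and the bound on $\nnrm{\widetilde g}{H^1(\partial\Omega)^d}$, I obtain \eqref{eq:wSob:est} with a constant depending only on $d$ and $\Omega$.

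The genuine analytic content sits entirely in the weighted homogeneous-Stokes estimate imported from \cite{BroShe95}; everything else is linearity bookkeeping. The points that need some care are: the endpoint trace bound $w|_{\partial\Omega}\in H^1(\partial\Omega)^d$ for $w\in H^2(B)^d$ across the Lipschitz surface $\partial\Omega$ (obtained by restricting $\nabla w\in H^1(B)^{d\times d}$ to $\partial\Omega$ and noting that the tangential derivatives of $w|_{\partial\Omega}$ are $L_2(\partial\Omega)$-combinations of these restrictions); the verification that the uniqueness assertions legitimize the splitting $u=v+w|_\Omega$; and, most importantly, checking that the homogeneous weighted estimate available in \cite{BroShe95} is really stated (or can be read off) for bounded Lipschitz domains with connected boundary in all dimensions $d\geq 3$, with the weight $\rho$ and $H^1(\partial\Omega)$ boundary data exactly as used above. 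I expect this last point --- pinning down the precise statement in \cite{BroShe95} --- to be the main obstacle; the reduction argument itself is routine.
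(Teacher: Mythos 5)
Your proposal is correct and follows essentially the same route as the paper: reduce to the homogeneous Stokes problem treated in \cite{BroShe95} by subtracting a regular particular solution obtained from the Stokes problem with extended body force and zero Dirichlet data on a smooth enclosing domain, then control the trace of that particular solution in $H^1(\partial\Omega)^d$ and invoke the weighted estimate of \cite[Section~2]{BroShe95}. The only cosmetic differences are that you extend $f$ by zero to a ball rather than using Rychkov's extension operator to a smooth domain, and you merge the two homogeneous problems (with data $g$ and with data $w|_{\partial\Omega}$) into a single one with data $g-w|_{\partial\Omega}$; neither changes the substance of the argument.
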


\begin{proof}
Estimate~\eqref{eq:wSob:est} has been proven in \cite[Section 2]{BroShe95} for~\eqref{St} with zero body force. I.e., for the solution $\bar{u}\in H^1(\Omega)^d$ of the homogeneous boundary value problem
\begin{equation}\label{BVP}
\begin{alignedat}{3}
-\Delta \bar{u} +  \nabla \bar{\pi} & = 0 &&~\text{ on } \Omega, \\
  \divergenz \bar{u} & = 0 &&~\text{ on } \Omega,\\
  \bar{u} & =  g &&~\text{ on } \partial \Omega,
\end{alignedat}
\end{equation}
with corresponding pressure $\bar{\pi}\in L_2(\Omega)$, we have
\begin{equation}\label{eq:wSob:BVP}
\int_\Omega\rho(x)\cdot \abs{\nabla^2 \bar{u}(x)}^2 \,\dInt x
+
\int_\Omega\rho(x)\cdot \abs{\nabla \bar{\pi}(x)}^2 \,\dInt x
\leq  C\,
\nnrm{g}{H^1(\partial\Omega)^d}^2.
\end{equation}
In order to extend this estimate to general body forces $f\in L_2(\Omega)^d$, we argue as follows. 
Let $\widetilde{\Omega}\subseteq\R^d$ be a bounded $\cont^\infty$-domain containing the closure of $\Omega$.
Furthermore, let $\mathcal{E}\colon L_2(\Omega)\to L_2(\widetilde{\Omega})$ be a bounded extension operator, e.g., take Rychkov's extension operator from \cite{Ryc1999} combined with the restriction operator onto $\widetilde{\Omega}$.
Since the boundary of $\widetilde{\Omega}$ is smooth, the stationary Stokes equation on $\widetilde{\Omega}$ with zero Dirichlet boundary condition and body force $\mathcal{E}f\in L_2(\widetilde{\Omega})$, i.e.,  
\begin{equation*}
\begin{alignedat}{3}
-\Delta \widetilde{u} +  \nabla \widetilde\pi & = \mathcal{E}f &&~\text{ on } \widetilde\Omega, \\
  \divergenz \widetilde u & = 0 &&~\text{ on } \widetilde\Omega,\\
  \widetilde u & =  0 &&~\text{ on } \partial \widetilde\Omega, 
\end{alignedat}
\end{equation*}
has a unique solution $\widetilde{u}\in H^2(\widetilde\Omega)^d$ with pressure $\widetilde\pi\in H^1(\widetilde\Omega)$, which fulfill the estimate
\begin{equation}\label{eq:SobEst:smooth}
\nnrm{\widetilde{u}}{H^2(\widetilde\Omega)^d} 
+
\nnrm{\widetilde{\pi}}{H^1(\widetilde\Omega)} 
\leq
C\,
\nnrm{\mathcal{E}f}{L_2(\widetilde\Omega)^d}
\end{equation}
see, e.g., \cite[Theorem~3]{AmrGir91}. 
Due to the boundedness of the domain $\Omega$ and of the extension operator $\mathcal{E}$, this yields 
\begin{equation}\label{eq:wSob:ext}
\int_\Omega\rho(x)\cdot \abs{\nabla^2 \widetilde u(x)}^2 \,\dInt x
+
\int_\Omega\rho(x)\cdot \abs{\nabla \widetilde \pi(x)}^2 \,\dInt x
\leq  C\,
\nnrm{f}{L_2(\Omega)^d}^2.
\end{equation}
The linear structure of the Stokes equation~\eqref{St} allows us to split its solution $u$ into $u=\widetilde{u}|_{\Omega}+\bar{u}-u_0$ with corresponding pressure $\pi = \widetilde\pi|_{\Omega}+\bar{\pi}-\pi_0$, where 
$u_0\in H^1(\Omega)^d$ solves
\begin{equation*}
\begin{alignedat}{3}
-\Delta u_0 +  \nabla \pi_0 & = 0 &&~\text{ on } \Omega, \\
  \divergenz u_0 & = 0 &&~\text{ on } \Omega,\\
  u_0 & =  \widetilde u|_{\partial\Omega} &&~\text{ on } \partial \Omega, 
\end{alignedat}
\end{equation*}
with corresponding pressure $\pi_0\in L_2(\Omega)$. 
Such a solution $u_0$ exists by \cite[Theorem~IV.1.1]{Gal2011}, since $\widetilde u\in H^2(\widetilde\Omega)$, so that $\widetilde u|_{\partial\Omega}\in H^1(\partial\Omega)^d$ due to classical results on traces in Sobolev spaces, see, e.g., \cite{Ding96}; 
note that $\widetilde u|_{\partial\Omega}$ verifies the compatibility condition~\eqref{eq:compCondition}, since 
$$
\int_{\partial \Omega} \widetilde{u}|_{\partial \Omega}\cdot \mathbf{n} \,\dInt \sigma = \int_{\Omega} \divergenz \widetilde{u}(x) \,\dInt x =0,
$$ 
due to a proper generalization of Gauss' theorem (see \cite[Exercise II.4.3]{Gal2011}), which can be proven by \cite[Lemma II.4.1,Theorem II.3.3, Theorem II.4.1]{Gal2011}. Moreover, the pair $(u_0,\pi_0)$ verifies 
\[
\int_\Omega\rho(x)\cdot \abs{\nabla^2 u_0(x)}^2 \,\dInt x
+
\int_\Omega\rho(x)\cdot \abs{\nabla \pi_0(x)}^2 \,\dInt x
\leq  C
\nnrm{\widetilde u|_{\partial\Omega}}{H^1(\partial\Omega)^d}^2,
\]
due to the corresponding estimate from~\cite[Section~2]{BroShe95} already used above to obtain~\eqref{eq:wSob:BVP}.
Thus, since $\widetilde u$ verifies \eqref{eq:SobEst:smooth},
\[
\nnrm{\widetilde u|_{\partial\Omega}}{H^1(\partial\Omega)^d}
\leq
C \nnrm{\widetilde{u}}{H^2(\widetilde\Omega)^d}
\leq
C \nnrm{\mathcal{E}f}{L_2(\widetilde\Omega)^d}
\leq
C \nnrm{f}{L_2(\Omega)^d},
\]
so that
\[
\int_\Omega\rho(x)\cdot \abs{\nabla^2 u_0(x)}^2 \,\dInt x
+
\int_\Omega\rho(x)\cdot \abs{\nabla \pi_0(x)}^2 \,\dInt x
\leq  C
\nnrm{f}{L_2(\Omega)^d}^2.
\]
Since all the constants used in this proof depend only on $d$ and $\Omega$, the last estimate, together with~\eqref{eq:wSob:BVP} and \eqref{eq:wSob:ext} proves the assertion.
\end{proof} 

\subsection{Besov regularity for the stationary Stokes equation}\label{Sec:stokes}

In this section we prove the following main result concerning the Besov regularity in the scale~\eqref{eq:scale} of the solution to the Stokes equation~\eqref{St} and of the corresponding pressure.

\begin{theo}\label{Th:BesovStokes}
Let $\Omega$ be a bounded Lipschitz domain in $\R^d$, $d\geq 3$, with connected boundary.  Let $u $ be the unique solution of \eqref{St} with $f\in L_2(\Omega)^d$ and $g\in H^1(\partial\Omega)^d$ fulfilling additionally~\eqref{eq:compCondition}, and let $\pi$ be the corresponding pressure.  Then
\begin{equation}\label{eq:BesovRegu}
u\in B^{s_1}_{\tau_1}(L_{\tau_1}(\Omega))^d, 
\quad \frac{1}{\tau_1}=\frac{s_1}{d}+\frac{1}{2},\quad 
0< s_1<\min\ssggklam{\frac{3}{2}\cdot\frac{d}{d-1},2},
\end{equation}
and 
\begin{equation}\label{eq:BesovRegp}
\pi\in B^{s_2}_{\tau_2}(L_{\tau_2}(\Omega)),
\quad \frac{1}{\tau_2}=\frac{s_2}{d}+\frac{1}{2},\quad 
0< s_2< \frac{1}{2}\cdot\frac{d}{d-1}.
\end{equation}
Moreover, for this range of parameters, the estimate
\begin{equation}\label{eq:NormEstBesovStokes}
\nnrm{u}{B^{s_1}_{\tau_1}(L_{\tau_1}(\Omega))^d}
+
\inf_{c\in\mathbb{R}} \nnrm{\pi+c}{ B^{s_2}_{\tau_2}(L_{\tau_2}(\Omega))} 
\leq C\,
\grklam{\nnrm{f}{L_2(\Omega)^d}+ \nnrm{g}{H^1(\partial\Omega)^d}}
\end{equation}
holds with a constant $C\in (0,\infty)$ that depends only on $\Omega$,  $d$, $s_1$, $s_2$, $\tau_1$, and $\tau_2$.
\end{theo}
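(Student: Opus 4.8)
\medskip
\noindent\textbf{Proof strategy.}
The plan is to combine the two regularity statements already at our disposal and feed them into a single wavelet-based embedding lemma. By Proposition~\ref{Prop:SobRegSt} we have $u\in H^{3/2}(\Omega)^d$ and $\pi\in H^{1/2}(\Omega)$ with the quantitative bound~\eqref{eq:SobEst}, and by Proposition~\ref{Prop:WSobRegSt} we additionally have $\rho^{1/2}\nabla^2 u\in L_2(\Omega)$ and $\rho^{1/2}\nabla\pi\in L_2(\Omega)$ with the bound~\eqref{eq:wSob:est}; in the notation of Section~\ref{Sec:Notation} this means $u\in H^{3/2}(\Omega)^d\cap W^2_1(L_2(\Omega))^d$ and $\pi\in H^{1/2}(\Omega)\cap W^1_1(L_2(\Omega))$. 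The heart of the matter is then the following embedding, which I would establish first: for $m\in\bN$, $\gamma>0$ and every $s$ with
\[
0<s<\min\ggklam{m,\ \tfrac{d}{d-1}\grklam{m-\tfrac12},\ \tfrac{d}{d-1}\,\gamma},\qquad \tfrac1\tau=\tfrac sd+\tfrac12,
\]
one has $H^\gamma(\Omega)\cap W^m_1(L_2(\Omega))\hookrightarrow B^s_\tau(L_\tau(\Omega))$, with norm bound $\nnrm{v}{B^s_\tau(L_\tau(\Omega))}\leq C\grklam{\nnrm{v}{H^\gamma(\Omega)}+\nnrm{\rho^{1/2}\nabla^m v}{L_2(\Omega)}}$. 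Granting this, the theorem follows at once: applying it componentwise to $u$ with $(m,\gamma)=(2,\tfrac32)$ gives~\eqref{eq:BesovRegu}, since here $m-\tfrac12=\gamma=\tfrac32$ and $\tfrac{d}{d-1}\cdot\tfrac32$ is the common second and third entry of the minimum, so the admissible range is exactly $0<s_1<\min\ggklam{\tfrac32\cdot\tfrac{d}{d-1},\,2}$; and applying it to $\pi$ with $(m,\gamma)=(1,\tfrac12)$ gives~\eqref{eq:BesovRegp}, since now $m-\tfrac12=\gamma=\tfrac12$ and $\tfrac{d}{d-1}\cdot\tfrac12<1=m$ for $d\geq 3$. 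Finally, \eqref{eq:NormEstBesovStokes} follows by combining the norm bound of the embedding with~\eqref{eq:SobEst} and~\eqref{eq:wSob:est}; for the pressure one applies the embedding to $\pi+c$ (noting $\nabla(\pi+c)=\nabla\pi$) and then passes to $\inf_{c\in\R}$, using that~\eqref{eq:SobEst} controls $\inf_c\nnrm{\pi+c}{H^{1/2}(\Omega)}$.

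\medskip
\noindent\textbf{Proof of the embedding lemma.}
I would use the wavelet characterisations of Propositions~\ref{Prop:CharacterizationBesov1} and~\ref{Prop:CharacterizationBesov2}. Fix a biorthogonal wavelet basis with $r>\max\{s,m-1\}$, let $E$ be a Sobolev extension operator bounded from $H^\gamma(\Omega)$ into $H^\gamma(\R^d)$ (e.g.\ Rychkov's, cf.~\cite{Ryc1999}), and set $w:=\sum_{\lambda:\,\supp\psi_\lambda\cap\Omega\neq\emptyset}\langle Ev,\widetilde\psi_\lambda\rangle\,\psi_\lambda$. Since the discarded wavelets are supported in $\R^d\setminus\Omega$ one has $w|_\Omega=v$, so by Proposition~\ref{Prop:CharacterizationBesov2} it suffices to bound $\sum_{\lambda:\,\supp\psi_\lambda\cap\Omega\neq\emptyset}\abs{\langle Ev,\widetilde\psi_\lambda\rangle}^\tau$. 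Decompose $\Omega$ into dyadic boundary layers $\Omega_\ell:=\ggklam{x\in\Omega:2^{-\ell-1}<\rho(x)\leq 2^{-\ell}}$, so $\abs{\Omega_\ell}\lesssim 2^{-\ell}$ and $\rho\simeq 2^{-\ell}$ on $\Omega_\ell$, and at each wavelet level $j$ split the wavelets meeting $\Omega$ into an interior group (support inside $\Omega$ at distance $\gtrsim 2^{-j}$ from $\partial\Omega$; these can only meet layers $\Omega_\ell$ with $\ell\lesssim j$) and a collar group (support within $\simeq 2^{-j}$ of $\partial\Omega$; there are $\lesssim 2^{j(d-1)}$ of them at level $j$). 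For the interior group, on $\Omega_\ell$ there are $\simeq 2^{jd-\ell}$ wavelets; subtracting a degree-$(m-1)$ Taylor polynomial (allowed since $r\geq m-1$) and using a Whitney/Bramble--Hilbert estimate gives $\abs{\langle v,\widetilde\psi_{i,j,k}\rangle}\lesssim 2^{-jm}\nnrm{\nabla^m v}{L_2(\supp\widetilde\psi_{i,j,k})}$; Hölder in $k$ with exponents $2/\tau,\,2/(2-\tau)$, the bounded overlap of supports, $\rho\simeq 2^{-\ell}$ on $\Omega_\ell$, and the identity $1-\tau/2=\tau s/d$ lead to a level-$j$, layer-$\Omega_\ell$ bound of the shape $C\,c_\ell^\tau\,2^{j(s-m)\tau}\,2^{\ell\tau(1/2-s/d)}$ with $c_\ell^2:=\int_{\Omega_\ell}\rho\,\abs{\nabla^m v}^2$; summing over $j\geq\ell$ (finite since $s<m$) and then over $\ell$ by Hölder (finite since $s-m+\tfrac12-\tfrac sd<0$, i.e.\ $s<\tfrac{d}{d-1}(m-\tfrac12)$) bounds the interior contribution by $C\,\nnrm{\rho^{1/2}\nabla^m v}{L_2(\Omega)}^\tau$. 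For the collar group I would not use $\nabla^m v$ at all but only $Ev\in H^\gamma(\R^d)=B^\gamma_2(L_2(\R^d))$: by Proposition~\ref{Prop:CharacterizationBesov1}, $\sum_k\abs{\langle Ev,\widetilde\psi_{i,j,k}\rangle}^2\lesssim 2^{-2j\gamma}\varepsilon_j$ with $\sum_j\varepsilon_j\lesssim\nnrm{Ev}{H^\gamma(\R^d)}^2$, and Hölder together with $1-\tau/2=\tau s/d$ bounds the level-$j$ collar contribution by $C\,\varepsilon_j^{\tau/2}\,2^{j\tau((d-1)s/d-\gamma)}$, summable in $j$ exactly when $s<\tfrac{d}{d-1}\gamma$. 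Adding the two groups and taking $\tau$-th roots yields the asserted quantitative embedding (using $\nnrm{Ev}{H^\gamma(\R^d)}\lesssim\nnrm{v}{H^\gamma(\Omega)}$).

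\medskip
\noindent\textbf{Main obstacle.}
The difficulty sits entirely in the wavelet bookkeeping near $\partial\Omega$: setting up the dyadic layer decomposition, counting at each scale $j$ how many level-$j$ wavelets lie interior to a given layer $\Omega_\ell$ (only $\ell\lesssim j$ contribute) and how many meet the $2^{-j}$-neighbourhood of $\partial\Omega$, controlling the bounded overlap of wavelet supports, and — most delicately — verifying that the two threshold conditions $s<\tfrac{d}{d-1}(m-\tfrac12)$ and $s<\tfrac{d}{d-1}\gamma$ are precisely what renders the two geometric sums finite after the Hölder steps. A second, more technical point is that $B^s_\tau(L_\tau(\Omega))$ is defined by restriction from $\R^d$: assembling only the wavelets whose support meets $\Omega$, with coefficients read off from a fixed Sobolev extension of $v$, is what makes the argument rigorous, and it is essential that the $H^\gamma$-information is needed only in a thin collar of $\partial\Omega$ — using it throughout $\Omega$ would amount to claiming the false embedding $B^\gamma_2(L_2(\R^d))\hookrightarrow B^s_\tau(L_\tau(\R^d))$. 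The Lipschitz character of $\partial\Omega$ enters only through $\abs{\Omega_\ell}\lesssim 2^{-\ell}$ and the collar count $\lesssim 2^{j(d-1)}$, and the passage from the lemma back to the theorem is routine once Propositions~\ref{Prop:SobRegSt} and~\ref{Prop:WSobRegSt} are in hand.
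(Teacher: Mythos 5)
Your proposal is correct and takes essentially the same route as the paper: Theorem~\ref{Th:BesovStokes} is deduced from Propositions~\ref{Prop:SobRegSt} and~\ref{Prop:WSobRegSt} via an embedding of $H^{\alpha_0}(\Omega)\cap W^\gamma_\alpha(L_2(\Omega))$ into the adaptivity scale, which is exactly the paper's Theorem~\ref{Th:imbedding} in the special case $\alpha=1$, applied with $(\gamma,\alpha_0)=(2,3/2)$ for $u$ and $(1,1/2)$ for $\pi$. Your wavelet proof of that embedding (extension, interior/collar splitting, Whitney estimate plus H\"older for the interior part, H\"older plus the $H^{\alpha_0}$-information for the collar) coincides with the paper's argument up to bookkeeping: you index the interior wavelets by dyadic distance layers $\Omega_\ell$ instead of the sets $\Lambda_{j,m}$, and you sum the collar term directly rather than invoking the embedding $B^{\alpha_0}_2(L_2(\R^d))\hookrightarrow B^{(d-1)s/d}_\tau(L_2(\R^d))$.
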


Before we prove this statement, we want to emphasize its significance for the question raised in the introduction, whether adaptivity pays or not for the numerical treatment of the Stokes equation. 
Moreover, we relate our result to what is already known about the Besov regularity of the Stokes equation in the scale~\eqref{eq:scale}.

\begin{bem}\label{rem:MitreaAndWe}
\begin{enumerate}[align=right,label=\textup{(\roman*)}]
\item 

If we only assume that the boundary of the underlying domain $\Omega$ is Lipschitz (and connected), then to the best of our knowledge the Sobolev regularity result presented in Proposition~\ref{Prop:SobRegSt} is sharp, i.e. for arbitrary $\varepsilon>0$, there exists a bounded Lipschitz domain $\Omega_\varepsilon$ and a function $f\in L_2(\Omega_\varepsilon)^d$, such that the solution $u$ to \eqref{St} with $g=0$ fails to have $L_2$-Sobolev regularity of order $3/2+\varepsilon$. However, Theorem~\ref{Th:BesovStokes} shows that for arbitrary Lipschitz domains we can go beyond $3/2$ in the scale~\eqref{eq:scale} of Besov spaces. 
For $d=3$ we can choose any $s_1$ less than $2$,  whereas for $d\geq 4$ the bound is given by $3/2\cdot d/(d-1)$, which is strictly greater than $3/2$.
As already mentioned in the introduction, this justifies the usage of adaptive numerical methods for the Stokes equation in the sense that in this situation they can  have a higher convergence rate than their classical uniform alternatives.
The same is true for the pressure $\pi$, since its Besov regularity in the scale~\eqref{eq:scale} is $d/(d-1)$ times higher than its worst case Sobolev regularity.

\item 
To the best of our knowledge, the most far reaching results concerning the Besov regularity of the solution $u$ to~\eqref{St} on general bounded Lipschitz domains (and for the corresponding pressure $\pi$), have been obtained in~\cite{MitMat2012}.
Using boundary integral methods, the authors undertake a detailed analysis of the Besov regularity of the boundary value problem for the Stokes equation.
Among others, for arbitrary dimensions $d\geq 2$, they determine corresponding ranges of smoothness and integrability parameters allowing for implications of the type
\[
f\in B^{s-2}_q(L_p(\Omega))^d,
g\in B^{s-1/p}_q(L_p(\partial\Omega))^d
\quad\Longrightarrow\quad
u\in B^s_q(L_p(\Omega))^d,
\pi\in B^{s-1}_q(L_p(\Omega))
\]
for Lipschitz domains $\Omega\subseteq\R^d$, see~\cite[Theorem~10.15]{MitMat2012}. However, these ranges of admissible parameters depend on the degree of roughness of the boundary $\partial\Omega$, which is described by a value $\varepsilon=\varepsilon(\Omega)\in (0,1]$: the smaller the $\varepsilon$, the rougher the underlying domain and the smaller the admissible range $\mathcal{R}_{d,\varepsilon}$.
Theorem~\ref{Th:BesovStokes} supports the claim from~\cite{MitMat2012} that the results therein are sharp for low dimensions $d=2,3$.
However, for higher dimensions $d\geq 4$, on general bounded Lipschitz domains  with connected boundary, i.e., if we do not make any further assumptions on the smoothness of the boundary, we obtain higher regularity in the scale~\eqref{eq:scale} than what is possible to extract from~\cite{MitMat2012}.
In detail, we have the following relationship between the two results.


\begin{itemize}
\item If $d\geq 4$ and $\varepsilon\leq 1/2\cdot 1/(d-1)$, i.e., if the underlying domain $\Omega$ is rough, the results in~\cite{MitMat2012} do not  imply a \cef Besov regularity higher than $3/2$ for the solution and $1/2$ for the corresponding pressure. Even choosing the integrability parameter less than $2$, does not help.
However, we can get higher with our result and reach any Besov regularity $s_1<3/2\cdot d/(d-1)$ in the scale~\eqref{eq:scale} for the solution and $s_2<1/2\cdot d/(d-1)$ for the pressure. 
Our findings are depicted in Figure~\ref{fig:DeVoreTriebelD4} by means of a so called DeVore-Triebel diagram. A point $(1/\tau,s)$ in the first quadrant stands for the Besov space $B^s_\tau(L_\tau(\Omega))^d$.
In particular, the ray with slope $d$ starting in $(1/2,0)$ represents the scale~\eqref{eq:scale} of Besov spaces.
Due to Theorem~\ref{Th:BesovStokes}, the regularity of the solution to~\eqref{St} climbs up this scale until it (almost) reaches the smoothness $s^*=3/2\cdot d/(d-1)$.  Firstly, we observe, as already discussed in the first part of this remark, that the Besov regularity in the scale \eqref{eq:scale} is higher than the Sobolev regularity. Secondly this Besov regularity is more than we can extract from~\cite{MitMat2012}.
If we do not impose any further assumption on the  Lipschitz character of the domain,  the results in~\cite{MitMat2012} merely guarantee that the solution is contained in those Besov spaces that correspond to the dark shaded area (and to any point below and to its right until reaching the line $\{(1/\tau,s)\colon 1/\tau=s/d+1\}$ due to standard embeddings), see, e.g.,~\cite[Theorem~2.61]{Cio14}. The only possibility to enlarge the admissible range of parameters by using~\cite{MitMat2012} is to impose more regularity on the boundary $\partial\Omega$ of the domain, i.e., to relax the conditions on $\varepsilon=\varepsilon(\Omega)$.
In Figure~\ref{fig:DeVoreTriebelD4}, this adds the light shaded area to the range covered by~\cite{MitMat2012}, see, in particular, Theorem~10.15 therein.
However, if $\varepsilon\leq 1/2\cdot 1/(d-1)$, this area does not include all Besov spaces from the scale~\eqref{eq:scale} with smoothness parameter less than $s^*$.

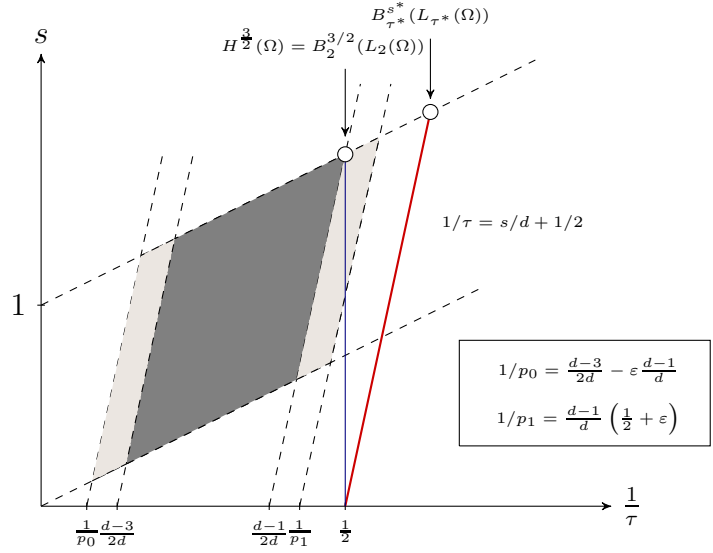
\begin{figure}
\begin{center}
\begin{tikzpicture}
\draw [->,>=stealth'] (0,0) -- (0,6) node[above] {$s$};
\draw [->,>=stealth'] (0,0) -- (7.5,0) node[right] {$\frac{1}{\tau}$};

\draw [dashed](3,0) -- (4.2,5.59992); 
\draw [dashed](1,0) -- (2,4.6666); 

\draw [dashed](0.6,0) -- (1.6,4.6666); 
\draw [dashed](3.4,0) -- (4.6,5.59992); 

\draw[dashed] (0,0) -- (5.75,2.875); 
\draw[dashed] (0,2.666) -- (6.5,5.91666); 


\draw [dashed, fill=lightgray](1.12,0.56) -- (3.36,1.68) -- (4,4.668) -- (1.76,3.548);

\draw [dashed,fill=verylightgray](3.36,1.68) -- (3.808,1.904) -- (4.448,4.892) -- (4,4.668);


\draw [dashed, fill=verylightgray](0.672,0.336) -- (1.12,0.56) -- (1.76,3.548) -- (1.312,3.324);


\draw [red,thick] (4,0) -- (5.12,5.228); 
\draw (6.2,3.5) node[fill=white, anchor=south] (BesL) {\tiny{$1/\tau=s/d+1/2$}};

\draw (1,0.05) -- (1,-0.05) node[anchor=north] {\tiny $\frac{d-3}{2d}$};
\draw (3,0.05) -- (3,-0.05) node[anchor=north] {\tiny $\frac{d-1}{2d}$};

\draw (4,0.05) -- (4,-0.05) node[anchor=north] {\tiny $\frac{1}{2}$};

\draw (5.12,5.228) node[draw=black, circle,fill=white,inner sep=2pt] (BesPoint) {};

\draw [<-,>=stealth'] (5.12,5.4) -- (5.12,6.2)  node[above] {\tiny $B^{s^*}_{\tau^*}(L_{\tau^*}(\Omega))$
};

\draw [blue](4,0) -- (4,4.6666);
\draw (4,4.6666) node[draw=black,circle,fill=white,inner sep=2pt] (SobPoint) {};

\draw [<-,>=stealth'] (4,4.9) -- (4,5.8)  node[above] { \tiny $H^{\frac{3}{2}}(\Omega)=B^{3/2}_2(L_2(\Omega))\quad\quad\:$ };

\draw (0.6,0.05) -- (0.6,-0.05) node[anchor=north] {\tiny $\frac{1}{p_0}$};
\draw (7.2,1.5) node[fill=white, anchor=south] (NstE) {\tiny $1/{p_0}=\frac{d-3}{2d}-\varepsilon\frac{d-1}{d}$};
\draw (3.4,0.05) -- (3.4,-0.05) node[anchor=north] {\tiny $\frac{1}{p_1}$};
\draw (7.2,0.8) node[fill=white, anchor=south] (NstE) {\tiny $1/{p_1}=\frac{d-1}{d}\left(\frac{1}{2}+\varepsilon\right)$};

\draw (5.5,0.8) -- (8.8,0.8) -- (8.8,2.2) -- (5.5,2.2) -- (5.5,0.8);

\draw (0.05,2.666) -- (-0.05,2.666) node[anchor=east] {$1$};

\end{tikzpicture}
\caption[DeVoreTriebel002]{\begin{tabular}[t]{l}Besov regularity for  the \icp solution $u$ to~\eqref{St} achieved  by  exploiting \cite{MitMat2012}, \\   versus the results in Theorem \ref{Th:BesovStokes}, illustrated in a DeVore/Triebel dia-\\ 
gram ($d\geq 4$). \end{tabular}}\label{fig:DeVoreTriebelD4}
\end{center}
\end{figure}

\item If $d\geq 4$ and $\varepsilon> 1/2\cdot 1/(d-1)$, i.e., if the boundary $\partial\Omega$ is smooth enough, then, the same regularity as in Theorem~\ref{Th:BesovStokes} can be established by exploiting~\cite[Theorem~10.15]{MitMat2012}; with slightly weaker assumptions on $f$ and $g$.

\item If $d=3$, Theorem~\ref{Th:BesovStokes} is covered by~\cite[Theorem~10.15]{MitMat2012}, which guarantees that, even under weaker assumptions on the data, the solution $u\in B^{2-\delta}_{p}(L_{p}(\Omega))$ for arbitrary small $\delta>0$ and suitable $p=p(\delta)>1$. 
Consequently, $u$ has Besov regularity of any order  $s_1<2$ in the scale~\eqref{eq:scale} due to standard embeddings of Besov spaces.
If $\varepsilon >1/2$, then under slightly stronger assumptions on the data, any regularity up to $9/4$ is possible due to~\cite[Theorem~10.15]{MitMat2012}.
The pressure has the corresponding regularity $s_2<1$ and $s_2<5/4$, respectively.
\end{itemize}

\end{enumerate}
\end{bem}

As already pointed out, the Sobolev regularity of the solution to the Stokes equation is limited by $3/2$ in a worst case scenario. 
However, we know from Proposition~\ref{Prop:WSobRegSt} that we can still guarantee integrability of the second derivatives if multiplied by a proper power of the distance to the boundary. These relationships will be the most important ingredients for the proof of the following embedding, which, together with Proposition~\ref{Prop:WSobRegSt}, proves Theorem~\ref{Th:BesovStokes} as we will explain at the end of the Section.
Recall that
\[
W^m_\alpha(L_p(\Omega))
=
\left\{f\in L_p(\Omega): 
 \nnrm{f}{W^m_\alpha(L_p(\Omega))}^p
=
\nnrm{f}{L_p(\Omega)}^p+\int_\Omega \rho(x)^\alpha \abs{\nabla^m f(x)}_{\ell_p}^p \dInt x<\infty \right\}, 
\]
for $m\in\bN_0$, $\alpha>0$ and $p\in(1,\infty)$.
  
\begin{theo}\label{Th:imbedding}
Let $\Omega$ be a bounded Lipschitz domain in $\R^d$, $d\geq 3$. Let $\alpha_0>0$,  $\alpha >0,$ and $\gamma\in\mathbb{N}$ with $\alpha <2\gamma$.
Then,
\[
H^{\alpha_0}(\Omega)\cap W^\gamma_\alpha(L_2(\Omega))
\hookrightarrow
B^s_{\tau}(L_\tau(\Omega)),
\qquad
\frac{1}{\tau}=\frac{s}{d}+\frac{1}{2},
\]
for all 
\[
0<s<\min\ssggklam{\frac{2\gamma -\alpha}{2}\cdot\frac{d}{d-1},\alpha_0\cdot\frac{d}{d-1},\gamma}.
\]
\end{theo}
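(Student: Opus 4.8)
The plan is to transfer the problem to the wavelet side and to run a dyadic decomposition of the wavelet indices according to their distance to $\partial\Omega$, estimating the coefficients near the boundary by the $H^{\alpha_0}$-regularity and in the interior by the weighted $W^\gamma_\alpha$-regularity. First I would fix a biorthogonal wavelet Riesz basis as in Section~\ref{Sec:BesovWavelet} with integer parameter $r\geq\gamma$ (then $r>s$ holds automatically, since $s<\gamma$, and the dual wavelets annihilate polynomials of degree $<\gamma$ by~\eqref{wl3}). As $B^s_\tau(L_\tau(\Omega))$ is defined by restriction from $\R^d$ (Definition~\ref{def:Bes}), it suffices to produce an extension of $u$ to $\R^d$ that lies in $B^s_\tau(L_\tau(\R^d))$. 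I would use a universal (Rychkov-type) extension operator, which is bounded $H^{\alpha_0}(\Omega)\to H^{\alpha_0}(\R^d)$ and which, being built from averages at a scale proportional to the distance to $\partial\Omega$, can be arranged so that the extension is smooth off $\overline{\Omega}$ and inherits the weighted control of $\nabla^\gamma$ in a collar of $\Omega$. Keeping the symbol $u$ for this extension and observing that the scaling-function coefficients $\langle u,\widetilde{\phi}_k\rangle$ and the level-$0$ wavelet coefficients are finite in number (the extension has compact support) and bounded by $\nnrm{u}{L_2}\lesssim\nnrm{u}{H^{\alpha_0}(\Omega)}$ (using $\tau<2$), Proposition~\ref{Prop:CharacterizationBesov2} reduces the theorem to the estimate
\[
\sum_{i=1}^{2^d-1}\sum_{j\geq1}\sum_{k\in\mathbb{Z}^d}\bigl|\langle u,\widetilde{\psi}_{i,j,k}\rangle\bigr|^\tau\ \lesssim\ \nnrm{u}{H^{\alpha_0}(\Omega)}^\tau+\nnrm{u}{W^\gamma_\alpha(L_2(\Omega))}^\tau.
\]
The finite $i$-sum only costs a constant, so I suppress it below. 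For each level $j$ I would split the $k$'s into a boundary set $\Lambda_j^{\partial}$ (the wavelets whose support meets the $c\,2^{-j}$-neighbourhood of $\partial\Omega$) and the rest, whose support lies at distance $>c\,2^{-j}$ from $\partial\Omega$; indices with support outside $\overline{\Omega}$ are handled exactly like the interior set $\Lambda_j^{\mathrm{int}}$ thanks to the smoothness and weighted control of the extension there, so only $\Lambda_j^{\partial}$ and $\Lambda_j^{\mathrm{int}}$ need to be discussed.

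For the boundary wavelets I would use only that $u\in H^{\alpha_0}(\R^d)=B^{\alpha_0}_2(L_2(\R^d))$. Since $\Omega$ is Lipschitz, the $c\,2^{-j}$-tube around $\partial\Omega$ has volume $\lesssim 2^{-j}$, so $\#\Lambda_j^{\partial}\lesssim 2^{j(d-1)}$; and Proposition~\ref{Prop:CharacterizationBesov1} with $p=q=2$ gives $\bigl(\sum_{k}|\langle u,\widetilde{\psi}_{i,j,k}\rangle|^2\bigr)^{1/2}\lesssim 2^{-j\alpha_0}\varepsilon_j$ with $(\varepsilon_j)\in\ell_2$ and $\nnrm{(\varepsilon_j)}{\ell_2}\lesssim\nnrm{u}{H^{\alpha_0}(\Omega)}$. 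Because $\tau<2$, Hölder's inequality with exponent $2/\tau$ gives
\[
\sum_{k\in\Lambda_j^{\partial}}\bigl|\langle u,\widetilde{\psi}_{i,j,k}\rangle\bigr|^\tau\ \lesssim\ (\#\Lambda_j^{\partial})^{1-\tau/2}\,2^{-j\alpha_0\tau}\,\varepsilon_j^\tau\ \lesssim\ 2^{j\tau[(d-1)s/d-\alpha_0]}\,\varepsilon_j^\tau,
\]
where I used $1/\tau-1/2=s/d$. The exponent is negative precisely when $s<\alpha_0\,d/(d-1)$; summing over $j$ (one more Hölder, with $(\varepsilon_j^\tau)\in\ell_{2/\tau}$) then bounds this part by $\lesssim\nnrm{u}{H^{\alpha_0}(\Omega)}^\tau$.

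The interior part carries the real work: applying the estimate below to all of $\Lambda_j^{\mathrm{int}}$ at once only yields $s<(2\gamma-\alpha)/2$, and the factor $d/(d-1)$ is recovered by splitting $\Lambda_j^{\mathrm{int}}$ further by the dyadic distance of the support to $\partial\Omega$. For $k\in\Lambda_j^{\mathrm{int}}$ let $Q_{j,k}\subseteq\Omega$ be the cube of sidelength $\sim 2^{-j}$ containing $\supp\psi_{i,j,k}$; there $\rho\geq c\,2^{-j}$ and $\operatorname{diam}(Q_{j,k})\sim 2^{-j}$, so $\rho$ is comparable throughout $Q_{j,k}$, say $\rho\sim 2^{-l}$ with $l=l(j,k)\leq j+C$, and $Q_{j,k}$ sits in the $\sim 2^{-l}$-boundary layer, of volume $\lesssim 2^{-l}$; hence, for fixed $j$ and $l$, there are $\lesssim 2^{-l}/2^{-jd}=2^{jd-l}$ such cubes. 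Since $\widetilde{\psi}_{i,j,k}$ annihilates polynomials of degree $<\gamma$, a standard Whitney-type polynomial approximation estimate, together with $\nnrm{\widetilde{\psi}_{i,j,k}}{L_2}\sim 1$ and $\rho^\alpha\sim 2^{-l\alpha}$ on $Q_{j,k}$, yields
\[
\bigl|\langle u,\widetilde{\psi}_{i,j,k}\rangle\bigr|\ \lesssim\ \inf_{P\in\mathcal{P}_{\gamma-1}}\nnrm{u-P}{L_2(Q_{j,k})}\ \lesssim\ 2^{-j\gamma}\nnrm{\nabla^\gamma u}{L_2(Q_{j,k})}\ \lesssim\ 2^{-j\gamma}2^{l\alpha/2}\Bigl(\int_{Q_{j,k}}\rho(x)^\alpha|\nabla^\gamma u(x)|^2\,\dInt x\Bigr)^{1/2}.
\]
Summing over the indices with fixed $j,l$ (the cubes have bounded overlap), squaring, and applying Hölder with exponent $2/\tau$, the interior contribution is dominated by $\sum_{l\geq0}\sum_{j\geq l-C}(2^{jd-l})^{1-\tau/2}\bigl(2^{-2j\gamma}2^{l\alpha}B_l\bigr)^{\tau/2}$, where $B_l:=\int_{\{x\in\Omega\,:\,\rho(x)\sim 2^{-l}\}}\rho(x)^\alpha|\nabla^\gamma u(x)|^2\,\dInt x$ satisfies $\sum_{l}B_l\lesssim\nnrm{u}{W^\gamma_\alpha(L_2(\Omega))}^2$. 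The exponent of $2^j$ equals $j\tau(s-\gamma)$, which is negative since $s<\gamma$, so the geometric sum over $j\geq l-C$ collapses; what remains is $\lesssim\sum_{l\geq0}2^{l\theta}B_l^{\tau/2}$ with $\theta=\tau\bigl[(d-1)s/d-(2\gamma-\alpha)/2\bigr]$, negative precisely when $s<\tfrac{d}{d-1}\cdot\tfrac{2\gamma-\alpha}{2}$; a final Hölder inequality using $(B_l)\in\ell_1$ then bounds it by $\lesssim\nnrm{u}{W^\gamma_\alpha(L_2(\Omega))}^\tau$. Adding the two contributions and recalling the reduction proves the embedding.

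I expect the dyadic boundary-layer refinement of $\Lambda_j^{\mathrm{int}}$ to be the main obstacle: one has to resolve the wavelets simultaneously by their scale $2^{-j}$ and by their distance $\sim 2^{-l}$ to $\partial\Omega$, balance the counting factor $2^{jd-l}$ against the gain $2^{-2j\gamma}$ from the vanishing moments and the weight factor $2^{l\alpha}$, and then perform the double summation in the correct order; this is exactly what upgrades the crude bound $s<(2\gamma-\alpha)/2$ to $s<\tfrac{d}{d-1}\cdot\tfrac{2\gamma-\alpha}{2}$. A secondary, more technical point is to set up the extension operator so that its behaviour in a collar of $\Omega$ does not spoil the estimate; I would deal with this either via the explicit convolution structure of Rychkov's extension or by a localization-and-flattening argument along $\partial\Omega$.
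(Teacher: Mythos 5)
Your proposal follows essentially the same route as the paper's proof: extend $v$ with Rychkov's operator, pass to wavelet coefficients via Propositions~\ref{Prop:CharacterizationBesov1} and~\ref{Prop:CharacterizationBesov2}, split the indices into a boundary layer (estimated through the $H^{\alpha_0}$-norm and the count $\#\Lambda_j^{\partial}\lesssim 2^{j(d-1)}$) and interior wavelets (estimated through a weighted Whitney bound plus a further dyadic decomposition by distance to $\partial\Omega$), and your exponent bookkeeping reproduces the paper's conditions $s<\gamma$, $s<\tfrac{d}{d-1}\cdot\tfrac{2\gamma-\alpha}{2}$ and $s<\tfrac{d}{d-1}\alpha_0$ exactly, merely with the double sum over scale $j$ and distance $l$ performed in the opposite order. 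The one place where you assert something you do not prove is the claim that the extension can be arranged to inherit the weighted control of $\nabla^\gamma$ off $\Omega$, which you invoke to handle wavelets supported outside $\overline{\Omega}$; the paper sidesteps this entirely by discarding all wavelets whose supporting cubes $Q_{j,k}$ do not meet $\Omega$, which is legitimate because the truncated expansion still restricts to $v$ on $\Omega$ and $B^s_{\tau}(L_\tau(\Omega))$ is defined by restriction. With that standard simplification (which also disposes of your compact-support remark for the scaling functions) your argument coincides with the paper's.
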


\begin{proof}  The proof can essentially be performed by following the line of \cite{DahDeV1997}. For reader's convenience, we briefly discuss the basic steps. Let us fix $s$ and $\tau$ as stated in the theorem. We choose a suitable wavelet basis
$$
\ggklam{
\phi_k,\psi_{i,j,k}: (i,j,k)\in \{1,...,2^d-1\}\times\mathbb{N}_0\times \mathbb{Z}^d
} 
$$
of $L_2(\mathbb{R}^d)$ satisfying the assumptions from Section~\ref{Sec:BesovWavelet}  with $r>\gamma$. 
This means, in particular, that there exists a cube $Q$ centered at the origin such that for $(j,k)\in \mathbb{N}_0 \times \mathbb{Z}^d$ the cube
$$Q_{j,k}:=2^{-j}k+ 2^{-j}Q $$
contains the support of $\psi_{i,j,k}$ for all $i\in \{1,...,2^d-1\}$ and $\supp \phi_k\subseteq Q_{0,k}$; remember that the supports of the corresponding dual basis fulfill the same requirements. 
Fix $v\in H^{\alpha_0}(\Omega)=B^{\alpha_0}_{2}(L_2(\Omega))$. Since $\Omega$ is assumed to have a Lipschitz continuous boundary, there exists a  linear bounded extension operator $\mathcal{E}\colon B^{\alpha_0}_2(L_2(\Omega))\to B^{\alpha_0}_2(L_2(\mathbb{R}^d))$,
see, e.g., \cite{Ryc1999}.
Due to Proposition~\ref{Prop:CharacterizationBesov1}, we have the following wavelet expansion 
\[
\mathcal{E}v
=
\sum_{k\in\mathbb{Z}^d}\langle \mathcal{E}v,\widetilde{\phi}_k\rangle\,\phi_k
+ 
\sum_{i=1}^{2^{d}-1}\sum_{j\in\mathbb{N}_0}\sum_{k\in\mathbb{Z}^d}\langle \mathcal{E}v,\widetilde{\psi}_{i,j,k}\rangle\,\psi_{i,j,k}
\]
for the extended $v$. If we restrict to the scaling functions and wavelets associated to those cubes $Q_{j,k}$ that have a non-empty intersection with $\Omega$, i.e., if we consider only the indexes from
$$\Lambda:=\left\{ (i,j,k)\in \{1,...,2^d-1\}\times\mathbb{N}_0 \times \mathbb{Z}^d : Q_{j,k} \cap\Omega\neq \emptyset\right\} $$
and
$$\Gamma:=\{k\in\mathbb{Z}^d: Q_{0,k}\cap \Omega \neq \emptyset \}, $$
then
\[
v= \sum_{k\in\Gamma}\langle \mathcal{E}v,\widetilde{\phi}_k\rangle\,\phi_k
+ 
\sum_{(i,j,k)\in \Lambda}\langle \mathcal{E}v,\widetilde{\psi}_{i,j,k}\rangle\,\psi_{i,j,k} 
\]
still holds on $\Omega$. 
Consequently, due to Proposition~\ref{Prop:CharacterizationBesov2} and since Besov spaces on domains are defined via restriction, in order to prove the theorem, it is enough to show that
\begin{equation}\label{eq:est:scalingfcts}
\ssgrklam{\sum_{k\in \Gamma}\abs{\langle \mathcal E v,\widetilde{\phi}_k\rangle}^\tau}^{\frac{1}{\tau}}
\leq C
\nnrm{v}{H^{\alpha_0}(\Omega)}
\end{equation}
and
\begin{equation}\label{eq:est:wavelets}
\ssgrklam{
\sum_{(i,j,k)\in\Lambda}\abs{\langle \mathcal{E} v,\widetilde{\psi}_{i,j,k}\rangle}^\tau}^{\frac{1}{\tau}}
\leq C
\grklam{
\nnrm{v}{H^{\alpha_0}(\Omega)} + \nnrm{v}{W^\gamma_\alpha(L_2(\Omega))}
},
\end{equation}
with appropriate constants $C\in (0,\infty)$ that do not depend on $v$. For simplicity, we make slight abuse of notation and write $v$ instead of $\mathcal{E}v$ in the sequel.

We start with~\eqref{eq:est:scalingfcts}. The index set $\Gamma$ is finite due to the boundedness of the underlying domain, so that a simple application of Jensen's inequality, followed by an application of Proposition~\ref{Prop:CharacterizationBesov1} together with the boundedness of the extension operator and the equivalence of the norms $\nnrm{\cdot}{H^{\alpha_0}(\Omega)}$ and $\nnrm{\cdot}{B^{\alpha_0}_{2}(L_2(\Omega))}$ on $B^{\alpha_0}_{2}(L_2(\Omega))$, yields
\[
\sum_{k\in \Gamma}\abs{\langle v,\widetilde{\phi}_k\rangle}^\tau
\leq
C 
\ssgrklam{\sum_{k\in \Gamma}\abs{\langle v,\widetilde{\phi}_k\rangle}^2}^{\frac{\tau}{2}}
\leq
C
\nnrm{v}{B^{\alpha_0}_2(L_2(\R^d))}^\tau
\leq
C
\nnrm{v}{B^{\alpha_0}_2(L_2(\Omega))}^\tau
\leq
C
\nnrm{v}{H^{\alpha_0}(\Omega)}^\tau.
\]
The constants above as well as all the other constants appearing in this proof do not depend on $v$.

To prove the second estimate~\eqref{eq:est:wavelets}, we split the sum on the left hand side into two parts and consider those coefficients that are related to wavelets with support in the interior of $\Omega$ isolated from those associated with wavelets that might have support on the boundary $\partial\Omega$ or outside of $\Omega$.
By using the notations
\begin{align*}
\rho_{j,k} & := \dist(Q_{j,k},\partial \Omega), \\
\Lambda_j & :=\{(i,l,k)\in \Lambda: l=j\}, \\
\Lambda_{j,m} & :=\{(i,j,k)\in\Lambda_j: m\cdot 2^{-j}\leq \rho_{j,k} <(m+1)\cdot 2^{-j}\},\\
\Lambda_j^0 & := \Lambda_j \backslash \Lambda_{j,0}, \\
\Lambda^0 & :=\bigcup_{j\in\mathbb{N}_0}\Lambda_j^0,
\end{align*}
for $j,m\in\mathbb{N}_0$, $k\in\mathbb{Z}^d$, this splitting can be written as
\begin{equation}\label{eq:est:split}
\sum_{(i,j,k)\in\Lambda}
\abs{\langle v,\widetilde\psi_{i,j,k}\rangle}^\tau 
=
\sum_{(i,j,k)\in\Lambda^0}
\abs{\langle v,\widetilde \psi_{i,j,k}\rangle}^\tau
+
\sum_{(i,j,k)\in\Lambda\backslash \Lambda^0}
\abs{\langle v,\widetilde \psi_{i,j,k}\rangle}^\tau
=:I+I\!\!I.
\end{equation}
\icp

To estimate $I$, we exploit a Whitney type estimate (see \cite[Theorem 3.4]{DeVSha1984}) to obtain 
\begin{align*}
\abs{ \langle v,\widetilde{\psi}_{i,j,k}\rangle}
& \leq C\, 2^{-j\gamma}\rho_{j,k}^{-\alpha/2}\left(\int_{Q_{j,k}}\abs{\rho(x)^{\alpha/2}\nabla^\gamma v(x)}_{\ell_2}^2\,\dInt x\right)^{1/2}\\
& =: C\, 2^{-j\gamma}\rho_{j,k}^{-\alpha/2}\,\mu_{j,k},
\end{align*}
with a finite constant $C$ that does not depend on $j$ or $k$. Fix $j\in\mathbb{N}_0$. Exploiting H\"older's inequality with parameters $2/\tau >1$ and $2/(2-\tau)$, we obtain
\begin{equation}\label{eq:estimateWaveletCoeff}
\sum_{(i,j,k)\in \Lambda_j^0} \abs{\langle v,\widetilde \psi_{i,j,k}\rangle}^\tau 
\leq C \,\ssgrklam{\sum_{(i,j,k)\in \Lambda_j^0}\mu_{j,k}^2}^{\frac{\tau}{2}} \ssgrklam{\sum_{(i,j,k)\in \Lambda_j^0}2^{- \frac{2j\tau \gamma}{2-\tau}}\rho_{j,k}^{-\frac{\alpha \tau}{2-\tau}}}^{\frac{2-\tau}{2}}.
\end{equation}
The first factor on the right hand side can be estimated by a constant times $\nnrm{v}{W^\gamma_\alpha(L_2(\Omega))}^{\tau}$, which is bounded by assumption (see, e.g., the proof of \cite[Theorem~4.7]{Cio14} for details).
In order to estimate the second factor we use the Lipschitz character of $\Omega$, which guarantees that
\begin{equation}\label{EstimateIndexSet1}
|\Lambda_{j,m}|
\leq C\, 2^{j(d-1)}\quad \text{for all}\quad j,m\in\mathbb{N}_0.
\end{equation}
Moreover, the boundedness of $\Omega$ yields
 $\Lambda_{j,m}=\emptyset$  for all $j,m\in\mathbb{N}_0$ with $m\geq C\cdot 2^j$. 
The constant $C$ in both estimates does not depend on $j$ or $m$. 
Consequently, we obtain
\begin{equation}\label{eq:est:sharply}
\rrklam{\sum_{(i,j,k)\in \Lambda_j^0}2^{- \frac{2j\tau \gamma}{2-\tau}}\rho_{j,k}^{-\frac{\alpha \tau}{2-\tau}}}^{\frac{2-\tau}{2}} 
\leq C\,
\sgrklam{2^{j\left( d-1-\frac{(2\gamma-\alpha)\tau}{2-\tau}\right)} + 2^{j\left( d-\frac{2\gamma\tau}{2-\tau}\right)} }^{\frac{2-\tau}{2}}.
\end{equation}
If $\alpha\tau/(2-\tau)>1$, the last estimate follows from the convergence of the harmonic series. 
For $0<\alpha\tau/(2-\tau)\leq 1$, it can be obtained by estimating the integral $\int_1^{C 2^j}t^{-\frac{\alpha\tau}{2-\tau}}\,\dInt t$ properly. 
Summing up over $j\in\bN_0$ in~\eqref{eq:estimateWaveletCoeff} and using~\eqref{eq:est:sharply}, leads to
\[
\sum_{(i,j,k)\in \Lambda^0} \abs{\langle v,\widetilde \psi_{i,j,k}\rangle}^\tau 
\leq C
\nnrm{v}{W^\gamma_\alpha(L_2(\Omega))}^\tau
\sum_{j\in\bN_0}\sgrklam{2^{j\left( d-1-\frac{(2\gamma-\alpha)\tau}{2-\tau}\right)} + 2^{j\left( d-\frac{2\gamma\tau}{2-\tau}\right)} }^{\frac{2-\tau}{2}}.
\]
Obviously, the sums on the right hand side converge if, and only if, $0<s<\min\sggklam{\gamma,\frac{2\gamma-\alpha}{2}\frac{d}{d-1}}$.
Therefore, since this is part of our assumptions,
\begin{equation}\label{eq:est:I}
 \sum_{(i,j,k)\in \Lambda^0}\abs{\langle v,\widetilde \psi_{i,j,k}\rangle}^\tau 
\leq C\,
\nnrm{v}{W^\gamma_\alpha(L_2(\Omega))}^\tau.
\end{equation}

In the last step we have to estimate the second term $I\!\!I$ from~\eqref{eq:est:split}. 
To this end we use H\"older's inequality together with~\eqref{EstimateIndexSet1} to verify that for every $j\in\bN_0$,
$$
\sum_{(i,j,k)\in\Lambda_{j,0}}\abs{\langle v,\widetilde\psi_{i,j,k}\rangle}^\tau 
\leq 
C \,2^{j(d-1)\frac{2-\tau}{2}} 
\ssgrklam{\sum_{(i,j,k)\in\Lambda_{j,0}}\abs{\langle v,\widetilde\psi_{i,j,k}\rangle}^2}^{\frac{\tau}{2}}, $$
where $C\in(0,\infty)$ is a constant independent of $j$. Summing up over all $j\in\mathbb{N}_0$ and using the relationship $1/\tau=s/d+1/2$ yields 
\begin{equation*}
\sum_{(i,j,k)\in\Lambda\backslash \Lambda^0}
\abs{\langle v,\widetilde\psi_{i,j,k}\rangle}^\tau  
 \leq C\sum_{j\in \mathbb{N}_0} \ssgrklam{2^{j\frac{d-1}{d}\cdot s\cdot \tau}\ssgrklam{\sum_{(i,j,k)\in\Lambda_{j,0}}\abs{\langle v,\widetilde\psi_{i,j,k}\rangle}^2}^{\frac{\tau}{2}}}\\
 \leq C\, \nnrm{v}{B^{\frac{d-1}{d}s}_\tau(L_2(\mathbb{R}^d))}^\tau,
\end{equation*}
where the last estimate is due to Proposition~\ref{Prop:CharacterizationBesov1}.
Since $B^{\alpha_0}_2(L_2(\R^d)) \hookrightarrow B^{\frac{d-1}{d}s}_\tau(L_2(\R^d)) $ for arbitrary $0<s<\alpha_0\frac{d}{d-1}$, see, e.g., \cite[Proposition~2.3.2.2]{Tri1983}, we obtain
\begin{align*}
\sum_{(i,j,k)\in\Lambda\backslash \Lambda^0}
\abs{\langle v,\widetilde\psi_{i,j,k}\rangle}^\tau  
\leq C\, \nnrm{v}{B^{\alpha_0}_2(L_2(\mathbb{R}^d))}^\tau
\leq C\, \nnrm{v}{B^{\alpha_0}_2(L_2(\Omega))}^\tau
\leq C\, \nnrm{v}{H^{\alpha_0}(\Omega)}^\tau.
\end{align*}
This estimate together with~\eqref{eq:est:I} prove that~\eqref{eq:est:wavelets} holds and, therefore, so does our assertion.
\end{proof}

\begin{proof}[Proof of Theorem~\ref{Th:BesovStokes}]
 Due to Proposition~\ref{Prop:SobRegSt} and Proposition~\ref{Prop:WSobRegSt}, the statement follows from a straightforward application of the embedding obtained above in Theorem~\ref{Th:imbedding}.
\end{proof}

\section{Besov regularity for the stationary Navier-Stokes equation}\label{Sec:Navierstokes}

In this section we extend our analysis to the stationary Navier-Stokes equation
\begin{equation}
\begin{alignedat}{3}
-\Delta u +  \nu u\cdot(\nabla u)+ \nabla \pi & = f &&~\text{ on } \Omega, \\
  \divergenz u & = 0 &&~\text{ on } \Omega,\\
  u & =  g &&~\text{ on } \partial \Omega; 
\end{alignedat}\tag{\ref{eq:NavierStokes}} 
\end{equation}
%
$\nu >0$ denotes the Reynolds number and, as before, $g$ is assumed to fulfil \eqref{eq:compCondition} for compatibility reasons. Following the lines of the previous sections, we understand \eqref{eq:NavierStokes} in a weak sense and call $u\in H^1(\Omega)^d$ a \emph{(weak) solution} of \eqref{eq:NavierStokes} if $u$ is divergence free, satisfies $u=g$ on the boundary $\partial\Omega$ (in a trace sense) and fulfils the equation
\[
\int_\Omega \sum_{i,j=1}^d \left(\nabla u\right)_{ij}\left(\nabla \varphi\right)_{ij}\, \dInt x 
+ 
\nu\int_\Omega (u\cdot (\nabla u))\,\varphi\,\dInt x 
= 
 f(\varphi) 
-
\int_\Omega \sum_{i=1}^d \pi\,\frac{\partial\varphi_i}{\partial x_i}\,\dInt x
\] 
for all $\varphi\in\mathcal{C}_0^\infty(\Omega)$ with a suitable $\pi\in L_2(\Omega)$. See \cite[Chapter IX]{Gal2011} for more details. 

Our goal is to establish existence of a solution to~\eqref{eq:NavierStokes} with high regularity in the scale~\eqref{eq:scale} of Besov spaces.
To this end we exploit what we proved in the previous section about the regularity of the Stokes equation together with a fixed point argument.
Our approach is based on the following basic observation:
Assume that $u\in H^1(\Omega)^d$ is a solution to~\eqref{eq:NavierStokes} with $f\in L_2(\Omega)^d$ and $g\in H^1(\partial\Omega)^d$. Then, if we could guarantee that $u\cdot(\nabla u)\in L_2(\Omega)^d$, the solution $u\in H^1(\Omega)^d$ to~\eqref{eq:NavierStokes} would actually be a solution to~\eqref{St} with body force $f-u\cdot(\nabla u)\in L_2(\Omega)^d$ (instead of $f$) and prescribed velocity field $g\in H^1(\partial\Omega)^d$.
As a consequence, $u$ and the corresponding pressure $\pi$ would have the Besov regularity in the scale~\eqref{eq:scale} guaranteed by Theorem~\ref{Th:BesovStokes}.
Of course, $u\in H^1(\Omega)^d$ is not a sufficient condition for $u\cdot(\nabla u)\in L_2(\Omega)^d$. 
However, the latter would hold if we could additionally guarantee that our solution is essentially bounded. 
This would definitively be fulfilled if we require $u\in B^t_p(L_p(\Omega))^d$ for some $p>1$ and $t\notin\bN$ with $t>d/p$, since in this case $B^t_p(L_p(\Omega))=W^t(L_p(\Omega)) \hookrightarrow L_\infty(\Omega)$ due to Sobolev's embedding theorem.
A solution to~\eqref{St} with this property can be obtained by exploiting the results from~\cite{MitMat2012}.
Theorem~10.15 therein guarantees, among others, that the Stokes problem~\eqref{St} with body force $f\in B^{t-2}_p(L_p(\Omega))$ and boundary condition $g\in B^{t-1/p}_p(L_p(\partial\Omega))^d$ fulfilling~\eqref{eq:compCondition} has a unique solution $u\in B^t_p(L_p(\Omega))^d$ with corresponding pressure $\pi\in B^{t-1}_p(L_p(\Omega))$---provided the parameters $p$ and $t$ are within an admissible range $\mathcal{R}_{d,\varepsilon}$ that depends on the dimension $d$ and on the Lipschitz character of the underlying domain $\Omega$, which is expressed by the quantity $\varepsilon=\varepsilon(\Omega)\in (0,1]$, see also Remark \ref{rem:MitreaAndWe}. Moreover, there exists a finite constant $C>0$, such that
\begin{align*}
\nnrm{u}{B^t_p(L_p(\Omega))^d}
+
\inf_{c\in\R}\nnrm{\pi+c}{B^{t-1}_p(L_p(\Omega))}
\leq C
\grklam{
\nnrm{f}{B^{t-2}_p(L_p(\Omega))^d}
+
\nnrm{g}{B^{t-1/p}_p(L_p(\partial\Omega))^d}
}.
\end{align*}
For $d=3$, there always exists a pair $(p,t)$ with $t>d/p$ and $t\notin\bN$ within the range $\mathcal{R}_{3,\varepsilon}$ of parameters admissible in~\cite[Theorem~10.15]{MitMat2012}, since $\mathcal{R}_{3,\varepsilon}$ covers all $p>2$ and $t\in\R$ such that
\begin{equation}\label{eq:range:cond:3}
\max\ssggklam{\frac{3}{p},1} < t< \min\ssggklam{\frac{3}{p}+ \varepsilon ,\,\, 1+\frac{1}{p}}.
\end{equation} 
This is also the case for $d\geq 4$ if the underlying domain is smooth enough, i.e., if we assume that the quantity $\varepsilon=\varepsilon(\Omega)$ describing the Lipschitz character of $\Omega$ fulfils
\begin{equation*}
\varepsilon > \frac{d-3}{2(d-1)}.
\end{equation*} 
Then the corresponding range $\mathcal{R}_{d,\varepsilon}$ in~\cite[Theorem~10.15]{MitMat2012} covers all $p>d-1$ and $t\in\R$ such that
\begin{equation}\label{eq:range:cond:d}
\max\ssggklam{\frac{d}{p},1} < t< \min\ssggklam{\frac{d}{p}+ (d-1)\varepsilon -\frac{d-3}{2},\,\, 1+\frac{1}{p}}.
\end{equation}
Thus, for these ranges of parameters \cite[Theorem~10.15]{MitMat2012} guarantees that the linear solution operator of the Stokes problem~\eqref{St},
\begin{align*}
L:=L_{t,p,\Omega}\colon B^{t-2}_{p}(L_p(\Omega))^d\times B^{t-1/p,0}_{p}(L_p(\partial\Omega))^d &\to B^t_p(L_p(\Omega))^d\times \rklam{B^{t-1}_p(L_p(\Omega))/\R_\Omega}\\
(f,g)&\mapsto L(f,g):=(u,\pi),
\end{align*}
where $u$ is the unique solution to~\eqref{St} with body force $f$ and boundary value $g$, and $\pi$ is the corresponding pressure, is well-defined and bounded (see Section~\ref{Sec:Notation} for notation). We denote its operator norm by
\[
\nrm{L_{t,p,\Omega}}:= \sup_{(f,g)\in Y\setminus \{0\}} \frac{\nnrm{L_{t,p,\Omega}(f,g)}{X_{t,p,\Omega}}}{\nnrm{(f,g)}{Y_{t,p,\Omega}}},
\]
where
\[
X:=X_{t,p,\Omega}:= B^t_p(L_p(\Omega))^d\times \rklam{B^{t-1}_p(L_p(\Omega))/\R_\Omega}
\]
and 
\[
Y:=Y_{t,p,\Omega}:=B^{t-2}_{p}(L_p(\Omega))^d\times  B^{t-1/p,0}_{p}(L_p(\partial\Omega))^d.
\]
Using this notation, we can present our main result concerning the regularity of the solution to~\eqref{eq:NavierStokes} in the scale~\eqref{eq:scale} of Besov spaces.

\begin{theo}\label{Th:NavierStokesDG2}
Let $\Omega$ be a bounded Lipschitz domain in $\R^d$, $d\geq 3$, with connected boundary. Assume that the quantity $\varepsilon=\varepsilon(\Omega)\in(0,1]$ from~\cite{MitMat2012} describing the Lipschitz character of $\Omega$ fulfils
\begin{equation}\label{eq:eps:Lip}
\varepsilon > \frac{d-3}{2(d-1)}.
\end{equation}
Let $p> d-1$ and $t\in\R$ satisfy~\eqref{eq:range:cond:3} for $d=3$ and \eqref{eq:range:cond:d} for $d\geq 4$, respectively.
Fix 
\[
f\in L_{2}(\Omega)^d\cap B_p^{t-2}(L_p(\Omega))^d 
\]
and
\[
g\in H^1(\partial\Omega)^d\cap  B_p^{t-1/p}(L_{p}(\partial\Omega))^d\quad\text{with}\quad\int_{\partial\Omega}g \cdot  \mathbf{n} \,\dInt\sigma =0.
\]
Then there exists a finite constant $C=C_{t,p,\Omega}>0$ such that, if
\begin{equation}\label{eq:CondRegularity}
C_{t,p,\Omega}
\cdot 
\nu
\cdot
\grklam{
\nnrm{f}{B^{t-2}_p(L_p(\Omega))^d}
+
\nnrm{g}{B_p^{t-1/p}(L_{p}(\partial\Omega))^d}
}  
< 
\frac{1}{4\cdot  \nrm{L_{t,p,\Omega}}^2},
\end{equation}
then \eqref{eq:NavierStokes} has at least one solution $u\in H^{3/2}(\Omega)^d$ with corresponding pressure $\pi\in H^{1/2}(\Omega)$ which satisfy
\begin{equation}\label{eq:BesovReg:sol}
u\in B^{s_1}_{\tau_1}(L_{\tau_1}(\Omega))^d, \quad \frac{1}{\tau_1}=\frac{s_1}{d}+\frac{1}{2},\quad 0<s_1<\min\ssggklam{\frac{3}{2}\cdot\frac{d}{d-1},2},
\end{equation}
and 
\begin{equation}
\pi\in B^{s_2}_{\tau_2}(L_{\tau_2}(\Omega)),\quad \frac{1}{\tau_2}=\frac{s_2}{d}+\frac{1}{2},\quad 0<s_2< \frac{1}{2}\cdot\frac{d}{d-1},
\end{equation}
respectively. 
The pair $(u,\pi)$ is unique in $A_{1/2}
:=\{(v,q)\in X_{r,p,\Omega}\colon  
\nrm{L_{t,p,\Omega}}\cdot\nu\cdot C_{t,p,\Omega}\cdot
\nnrm{(v,q)}{X_{t,p,\Omega}}
\leq 
1/2
\}$.  
\end{theo}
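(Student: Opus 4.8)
The plan is to recast \eqref{eq:NavierStokes} as a fixed point equation for the map that takes a velocity field $v$ to the solution of the Stokes problem with body force $f-\nu\, v\cdot(\nabla v)$ and boundary datum $g$, and then to apply Banach's fixed point theorem on the small closed ball $A_{1/2}$ inside $X=X_{t,p,\Omega}$. The choice of the parameters $(p,t)$ with $t>d/p$, $t\notin\bN$ is exactly what makes $B^t_p(L_p(\Omega))\hookrightarrow L_\infty(\Omega)$ (and more, $W^t(L_p(\Omega))$ is a multiplication algebra up to first derivatives), so that the bilinear term $v\cdot(\nabla v)$ maps $X$ boundedly into $B^{t-2}_p(L_p(\Omega))^d$; this is the step that lets us land back in the hypotheses of \cite[Theorem~10.15]{MitMat2012}. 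Once the fixed point $(u,\pi)\in A_{1/2}\subseteq X$ is produced, the Besov regularity in the scale~\eqref{eq:scale} follows by feeding the identity ``$u$ solves \eqref{St} with body force $f-\nu\,u\cdot(\nabla u)\in L_2(\Omega)^d$ and datum $g\in H^1(\partial\Omega)^d$'' into Theorem~\ref{Th:BesovStokes}, which also delivers $u\in H^{3/2}(\Omega)^d$, $\pi\in H^{1/2}(\Omega)$.

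**Key steps.** First I would fix the solution operator $L=L_{t,p,\Omega}$ from \cite[Theorem~10.15]{MitMat2012} and, using the multiplication/embedding properties of $B^t_p(L_p(\Omega))$ on the bounded Lipschitz domain $\Omega$, record a bilinear estimate of the form $\nnrm{v\cdot(\nabla v)}{B^{t-2}_p(L_p(\Omega))^d}\leq C_{t,p,\Omega}\nnrm{v}{B^t_p(L_p(\Omega))^d}^2$, hence $\nnrm{v\cdot(\nabla v)}{B^{t-2}_p(L_p(\Omega))^d}\leq C_{t,p,\Omega}\nnrm{(v,q)}{X}^2$, and similarly the Lipschitz-in-$v$ bound $\nnrm{v\cdot(\nabla v)-w\cdot(\nabla w)}{B^{t-2}_p(L_p(\Omega))^d}\leq C_{t,p,\Omega}(\nnrm{(v,q)}{X}+\nnrm{(w,r)}{X})\nnrm{(v-w,q-r)}{X}$. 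Second, I would define $T\colon X\to X$ by $T(v,q):=L(f-\nu\,v\cdot(\nabla v),\,g)$ (note $g\in B^{t-1/p,0}_p(L_p(\partial\Omega))^d$ by \eqref{eq:compCondition}, and $f\in B^{t-2}_p(L_p(\Omega))^d$ by hypothesis), so a fixed point of $T$ is a weak solution of \eqref{eq:NavierStokes} with the prescribed data. Third, I would check that $T$ maps $A_{1/2}$ into itself: for $(v,q)\in A_{1/2}$ one has $\nnrm{T(v,q)}{X}\leq \nrm{L}\big(\nnrm{f}{B^{t-2}_p(L_p(\Omega))^d}+\nnrm{g}{B^{t-1/p}_p(L_p(\partial\Omega))^d}\big)+\nrm{L}\nu C_{t,p,\Omega}\nnrm{(v,q)}{X}^2$; using $\nrm{L}\nu C_{t,p,\Omega}\nnrm{(v,q)}{X}\leq 1/2$ and the smallness condition \eqref{eq:CondRegularity}, the right-hand side is $\leq$ (half the radius from the data term) $+\,\tfrac12\cdot\tfrac1{2\nrm{L}\nu C_{t,p,\Omega}}$, which is designed to stay $\leq \tfrac1{2\nrm{L}\nu C_{t,p,\Omega}}$, i.e.\ inside $A_{1/2}$. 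Fourth, the contraction property: for $(v,q),(w,r)\in A_{1/2}$, $\nnrm{T(v,q)-T(w,r)}{X}\leq \nrm{L}\nu C_{t,p,\Omega}(\nnrm{(v,q)}{X}+\nnrm{(w,r)}{X})\nnrm{(v-w,q-r)}{X}\leq \nrm{L}\nu C_{t,p,\Omega}\cdot\tfrac1{\nrm{L}\nu C_{t,p,\Omega}}\cdot\nnrm{(v-w,q-r)}{X}$, which is not yet a contraction — so the constant $C_{t,p,\Omega}$ and the factors of $4$ and $\nrm{L}^2$ in \eqref{eq:CondRegularity} must be chosen (by shrinking the radius slightly, e.g.\ replacing $1/2$ by a parameter and optimizing, or by absorbing an extra factor $<1$) to get a genuine contraction constant, say $\leq 3/4$. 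Finally, Banach's fixed point theorem gives a unique fixed point $(u,\pi)$ in $A_{1/2}$; uniqueness in $A_{1/2}$ is exactly the statement claimed, and the regularity assertions follow from Theorem~\ref{Th:BesovStokes} applied with right-hand side $f-\nu\,u\cdot(\nabla u)\in L_2(\Omega)^d$ (here one uses again $u\in L_\infty$ and $\nabla u\in L_2$, or better the embedding $X\hookrightarrow H^1$, to see $u\cdot(\nabla u)\in L_2$).

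**Main obstacle.** The delicate point is the bookkeeping of constants so that self-mapping of $A_{1/2}$ and the contraction estimate hold simultaneously under the single smallness hypothesis \eqref{eq:CondRegularity}; this is why the statement carries the specific factor $1/(4\nrm{L}^2)$ and why the ball is tied to the constant $\nrm{L}\nu C_{t,p,\Omega}$. Concretely, one must verify that with $R:=1/(2\nrm{L}\nu C_{t,p,\Omega})$ the data term $\nrm{L}(\nnrm{f}{}+\nnrm{g}{})$ is $\leq R/2$ (this is \eqref{eq:CondRegularity} after substituting $R$ and simplifying, since $\nrm{L}\cdot\tfrac{1}{4\nrm{L}^2\nu C_{t,p,\Omega}}=R/2$), and that on $A_{1/2}$ the Lipschitz constant of the quadratic term is $2\nrm{L}\nu C_{t,p,\Omega} R=1$, which forces a further small reduction (e.g.\ run the argument on the ball of radius $\theta R$ with $\theta<1$, getting contraction constant $2\theta<1$, then note $\theta=1/2$ already appears in $A_{1/2}$ if one rescales $C_{t,p,\Omega}$). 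A secondary technical point is the bilinear estimate itself: one needs that pointwise multiplication $B^t_p\times B^{t-1}_p\to B^{t-2}_p$ is bounded on $\Omega$ for the admissible range of $(p,t)$, which follows from $t>d/p$, $0<t-1$, and standard paramultiplication/Sobolev-embedding results (e.g.\ via $B^t_p(L_p(\Omega))\hookrightarrow L_\infty(\Omega)$, $B^{t-1}_p(L_p(\Omega))\hookrightarrow L_p(\Omega)$, and duality for the negative index); I would cite this rather than prove it.
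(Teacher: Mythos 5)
Your overall strategy coincides with the paper's: define $T(v,q)=L_{t,p,\Omega}(f-\nu\,v\cdot(\nabla v),g)$ on $X_{t,p,\Omega}$, control the quadratic term through $B^t_p(L_p(\Omega))\hookrightarrow L_\infty(\Omega)$ and $B^t_p(L_p(\Omega))\hookrightarrow W^1(L_p(\Omega))$ (landing in $L_p\hookrightarrow B^{t-2}_p\cap L_2$), apply Banach's fixed point theorem on $A_{1/2}$, and then bootstrap the Besov regularity by viewing the fixed point as a solution of \eqref{St} with body force $f-\nu\,u\cdot(\nabla u)\in L_2(\Omega)^d$ and invoking Theorem~\ref{Th:BesovStokes}. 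The self-mapping computation and the final regularity step are exactly as in the paper.

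The one genuine gap is the contraction step, which you flag but do not close. Your Lipschitz bound carries the factor $\nnrm{(v,q)}{X}+\nnrm{(w,r)}{X}$, which on $A_{1/2}$ gives Lipschitz constant exactly $1$, and your proposed repairs (shrink the radius to $\theta R$, or ``rescale $C_{t,p,\Omega}$'') prove existence but then only yield uniqueness in the \emph{smaller} ball, not in $A_{1/2}$ as the theorem asserts; rescaling $C_{t,p,\Omega}$ also silently changes the set $A_{1/2}$ and the hypothesis \eqref{eq:CondRegularity}. The paper avoids this by stating the Lipschitz estimate with $\max\{\nnrm{u}{B^t_p},\nnrm{\tilde u}{B^t_p}\}$ in place of the sum, so that on $A_\lambda$ the contraction constant is $\lambda$, and then chooses $\lambda$ by maximizing $\lambda-\lambda^2$ over $(0,1)$, which is precisely where the constant $1/4$ in \eqref{eq:CondRegularity} comes from. (Whether the $\max$ really replaces the sum without an extra factor $2$ is a bookkeeping point internal to the paper.) If you want to keep your sum-based estimate, the clean fix is to exploit the \emph{strict} inequality in \eqref{eq:CondRegularity}: for any fixed point $(u,\pi)\in A_{1/2}$, writing $a=\nrm{L_{t,p,\Omega}}\nu C_{t,p,\Omega}\nnrm{(u,\pi)}{X}$, the self-mapping inequality gives $a<1/4+a^2$, i.e.\ $(a-1/2)^2>0$, so $a<1/2$ strictly; hence two fixed points in $A_{1/2}$ satisfy $\nnrm{u-\tilde u}{}\leq(a+\tilde a)\nnrm{u-\tilde u}{}$ with $a+\tilde a<1$, which forces $u=\tilde u$ and restores uniqueness in all of $A_{1/2}$. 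Without some such argument, the uniqueness claim of the theorem is not established by your proposal.
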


\begin{bem}
\begin{enumerate}[align=right,label=\textup{(\roman*)}]
\item Note that the restriction~\eqref{eq:eps:Lip} is empty if $d=3$, i.e., the theorem holds for any arbitrary Lipschitz domain $\Omega\subseteq\R^3$ with connected boundary.
\item A close look to the proof below reveals that the constant $C=C_{t,p,\Omega}>0$ in the statement of Theorem~\ref{Th:NavierStokesDG2} can be chosen to be the product of the embedding constants of the embeddings in~\eqref{eq:embed:standard} and \eqref{eq:embed:Sobolev}.
\item The solution $u$ to \eqref{eq:NavierStokes} and the corresponding pressure $\pi$ determined in Theorem~\ref{Th:NavierStokesDG2} have $L_2$-Sobolev regularity $3/2$ and $1/2$, respectively.  To the best of our knowledge there is no result which guarantees that a solution $u$ to~\eqref{eq:NavierStokes} and a corresponding pressure term $\pi$  have a higher $L_2$-Sobolev regularity in the given setting. 
However, their Besov regularity in the scale~\eqref{eq:scale} is strictly higher than $3/2$ and $1/2$, respectively, see~\eqref{eq:BesovReg:sol} and \eqref{eq:BesovRegp}. Therefore, the usage of adaptive wavelet schemes for solving \eqref{eq:NavierStokes} is justified in the sense described in the introduction, see also Remark~\ref{rem:MitreaAndWe}.  
\end{enumerate}
\end{bem}

\begin{proof}[Proof of Theorem~\ref{Th:NavierStokesDG2}]
Fix the parameters $p$ and $t$ as well as the body force $f$ and the velocity field $g$ as required in our assumptions. We prove that the mapping
\begin{align*}
T:=T_{t,p,\Omega}^{f,g,\nu}: X_{t,p,\Omega} &\to X_{t,p,\Omega} \\
							(u,\pi)&\mapsto L_{t,p,\Omega}(f-\nu u\cdot(\nabla u),g),
\end{align*}
has a fixed point $(u,\pi)\in X_{t,p,\Omega}$ consisting of a solution $u$ to~\eqref{eq:NavierStokes} and the corresponding pressure $\pi$, both of them having the asserted properties. To this end, we first need some preparations.

First of all we check that the operator $T$ is well-defined. Since $p>2$, $1<t<2$, and $\Omega$ is bounded, the embeddings
\begin{align}\label{eq:embed:standard}
L_p(\Omega)\hookrightarrow B^{t-2}_p(L_p(\Omega))\cap L_2(\Omega)\quad\text{and}\quad
B^t_p(L_p(\Omega)) \hookrightarrow W^1(L_p(\Omega)),
\end{align}
hold, since the classical embeddings for Besov and Triebel-Lizorkin spaces, as they can be found, e.g., in \cite[Proposition~2.3.2/2]{Tri1983}, can be carried over to the case of bounded Lipschitz domains (definition via restriction) and the Sobolev spaces $W^k(L_p(\Omega))$, $k\in\bN_0$, can be described in terms of Triebel-Lizorkin spaces, see, e.g., \cite[Proposition~1.122(i)]{Tri2006}.
Moreover, since $t>d/p$, $t\notin\bN$, Sobolev's embedding theorem yields
\begin{align}\label{eq:embed:Sobolev}
B^t_p(L_p(\Omega)) =W^t(L_p(\Omega))\hookrightarrow L_\infty(\Omega),
\end{align}
see, e.g., \cite[Chapter 2.2.4]{RunSic1996}. These embeddings imply that $v\cdot (\nabla v)\in B^{t-2}_p(L_p(\Omega))^d\cap L_2(\Omega)^d$ whenever $v\in B^t_{p}(L_p(\Omega))^d$, since
\begin{align*}
\nnrm{v\cdot(\nabla v)}{B^{t-2}_p(L_p(\Omega))^d} 
+ 
\nnrm{v\cdot(\nabla v)}{L_2(\Omega)^d}
&\leq C\,
\nnrm{v\cdot(\nabla v)}{L_p(\Omega)^d}\\
&= C\,
\sgnnrm{\sum_{i=1}^d v_i\frac{\partial v}{\partial x_i}}{L_p(\Omega)^d}\\
&\leq C\,
\nnrm{v}{L_\infty(\Omega)^d} \nnrm{v}{W^1(L_p(\Omega))^d}\\
&\leq C\,
\nnrm{v}{B^t_p(L_p(\Omega))^d}^2,
\end{align*}
with a finite constant $C=:C_{t,p,\Omega}$, which is the product of the embedding constants of the embeddings above.
As a consequence, the operator
\begin{align*}
N:=N_{t,p,\Omega}^{f,g,\nu}\colon X_{t,p,\Omega} &\to Y_{t,p,\Omega}\cap \rklam{L_2(\Omega)^d\times H^1(\partial\Omega)^d}\\
(u,\pi) & \mapsto \rklam{f- \nu u\cdot (\nabla u), g}
\end{align*}
is well-defined, and, therefore, so is $T=L\circ N\colon  X_{t,p,\Omega}\to X_{t,p,\Omega}$.

Secondly, we prove the existence of a fixed point of $T$. A similar calculation as above shows that for $(u,\pi),(\tilde{u},\tilde{\pi})\in X_{t,p,\Omega}$,
\begin{align*}
\nnrm{T_{t,p,\Omega}(u,\pi) &- T_{t,p,\Omega}(\tilde{u},\tilde{\pi})}{X_{t,p,\Omega}}\\
&\leq
\nrm{L_{t,p,\Omega}}\cdot
\nu\cdot 
\nnrm{u\cdot(\nabla u)-\tilde{u}\cdot(\nabla\tilde{u})}{B^{t-2}_{p}(L_p(\Omega))^d}\\
&\leq
\nrm{L_{t,p,\Omega}}\cdot
\nu\cdot 
C_{t,p,\Omega}\cdot
\max\ggklam{\nnrm{u}{B^t_p(L_p(\Omega))^d},\nnrm{\tilde{u}}{B^t_p(L_p(\Omega))^d}}\cdot 
\nnrm{u-\tilde{u}}{B^t_p(L_p(\Omega))^d}.
\end{align*}
Thus, if we can find $\lambda<1$ such that
\[
(u,\pi)\in A_{\lambda}
:=
\ggklam{ 
(v,q)\in X_{r,p,\Omega}\colon  
\nrm{L_{t,p,\Omega}}\cdot
\nu\cdot 
C_{t,p,\Omega}\cdot
\nnrm{(v,q)}{X_{t,p,\Omega}}
\leq 
\lambda
}
\]
implies $T(u,\pi)\in A_{\lambda}$, then $T$ is a contraction on a the closed ball $A_\lambda\subseteq X$, so that we can obtain the fixed point we are seeking for by applying Banach's fixed point theorem. Assume $(u,\pi)\in A_{\lambda}$ for some $\lambda\in(0,1)$. Then,
\begin{align*}
\nnrm{T_{t,p,\Omega}&(u,\pi)}{X_{t,p,\Omega}}\\
&\leq
\nrm{L_{t,p,\Omega}}\,
\grklam{
\nnrm{f}{B^{t-2}_p(L_p(\Omega))^d}
+
\nnrm{g}{B_p^{t-1/p}(L_{p}(\partial\Omega))^d}
+
\nu \,
\nnrm{u\cdot(\nabla u)}{B^{t-2}_{p}(L_p(\Omega))^d}
}\\
&\leq
\nrm{L_{t,p,\Omega}}\,
\grklam{
\nnrm{f}{B^{t-2}_p(L_p(\Omega))^d}
+
\nnrm{g}{B_p^{t-1/p}(L_{p}(\partial\Omega))^d}
+
\nu \, C_{t,p,\Omega}\,
\nnrm{u}{B^{t}_{p}(L_p(\Omega))^d}^2
}\\ 
&\leq
\nrm{L_{t,p,\Omega}}\,
\sgrklam{
\nnrm{f}{B^{t-2}_p(L_p(\Omega))^d}
+
\nnrm{g}{B_p^{t-1/p}(L_{p}(\partial\Omega))^d}
+
\frac{\lambda^2}{\nu\,C_{t,p,\Omega}\,\nrm{L_{t,p,\Omega}}^2}
}.
\end{align*}
Moreover, 
\begin{align*}
\nrm{L_{t,p,\Omega}}\,
\sgrklam{
\nnrm{f}{B^{t-2}_p(L_p(\Omega))^d}
+
\nnrm{g}{B_p^{t-1/p}(L_{p}(\partial\Omega))^d}
+
\frac{\lambda^2}{\nu\,C_{t,p,\Omega}\,\nrm{L_{t,p,\Omega}}^2}
}
&\leq \frac{\lambda}{\nu\,C_{t,p,\Omega}\,\nrm{L_{t,p,\Omega}}}
\end{align*}
if, and only if,
\begin{align*}
\nu\, C_{t,p,\Omega}\,\nrm{L_{t,p,\Omega}}^2\,
\grklam{
\nnrm{f}{B^{t-2}_p(L_p(\Omega))^d}
+
\nnrm{g}{B_p^{t-1/p}(L_{p}(\partial\Omega))^d}
}
\leq -\lambda^2+\lambda.
\end{align*}
The right hand side is positive on $(0,1)$ and attains its maximum $1/4$ at $\lambda=1/2$. 
Thus, $\lambda_0=1/2$ does the job, so that, if~\eqref{eq:CondRegularity} is assumed to hold,  then we have a fixed point $(u,\pi)\in A_{1/2} \subseteq X_{t,p,\Omega}$.

Finally, we note that 
\[
X_{t,p,\Omega}\hookrightarrow H^{1}(\Omega)^d\times \rklam{L_2(\Omega)/\R_\Omega},
\] 
due to the embeddings discussed at the beginning of the proof.
Moreover, $u\cdot(\nabla u)\in L_2(\Omega)$, as shown above. 
Therefore, any fixed point $(u,\pi) \in X_{t,p,\Omega}$ consists, by definition, of a solution $u\in H^1(\Omega)^d$ to the Stokes problem~\eqref{St} with body force $(f-\nu u\cdot(\nabla u))\in L_2(\Omega)$ (instead of $f$) and prescribed velocity field $g\in H^1(\partial\Omega)^d$ together with its corresponding pressure $\pi\in L_2(\Omega)$. Consequently, $u\in H^{3/2}(\Omega)^d$ and $\pi\in H^{1/2}(\Omega)$ due to Proposition~\ref{Prop:SobRegSt} and they have the asserted Besov regularity in the scale~\eqref{eq:scale} due to Theorem~\ref{Th:BesovStokes}.
\end{proof}

\addcontentsline{toc}{section}{Literaturverzeichnis}

\end{document}